\theoremstyle{plain}      \newtheorem{theo}{Theorem}[section]
                          \newtheorem{pro}[theo]{Proposition}
                          \newtheorem{cor}[theo]{Corollary}
\theoremstyle{definition} \newtheorem{exa}{Example}[section]
\theoremstyle{remark}	    \newtheorem*{rem}{Remark}
\date{\today}\numberwithin{equation}{section}
\date{}
\newcommand\bbn{\mathbb{N}}
\newcommand\real{\mathbb{R}}
\newcommand\bbz{\mathbb{Z}}
\newcommand\hphi{\hat\phi}
\newcommand\phin{\phi^{(n)}}
\begin{document}

\title{Convolution powers of complex functions on $\mathbb Z$}

\author{Persi Diaconis\footnote{Supported in part by NSF grant 0804324.}\\
\textit{Department of Mathematics and Department of Statistics}\\\textit{Stanford University}\and%
        Laurent Saloff-Coste\footnote{Supported in part by NSF grant DMS 104771.}\ %
\footnote{Corresponding author: \texttt{lsc@math.cornell.edu}}\\
\textit{Department of Mathematics}\\\textit{Cornell University}}

\maketitle

\begin{abstract}
Repeated convolution of a probability measure on $\mathbb Z$ leads to the
central limit theorem and other limit theorems. This paper investigates
what kinds of results remain \textit{without} positivity.
It reviews theorems due to Schoenberg, Greville, and Thom\'ee which
are motivated by applications to data smoothing (Schoenberg and Greville)
and finite difference schemes (Thom\'ee).
Using Fourier transform arguments, we prove detailed  decay bounds
for convolution powers of  finitely supported
complex functions on $\mathbb Z$. If $M$ is an hermitian contraction,
an estimate for the
off-diagonal entries of the powers $M_k^n$ of $M_k=I-(I-M)^k$ is obtained.
This generalizes
the Carne--Varopoulos Markov chain estimate.
\end{abstract}

\section{Introduction} \label{sec-int}

This is the first of a series of papers regarding the following question.
What can be said about the convolution powers  of a finitely
supported function on a countable group?

A lot is known if we are willing to
assume that the function, say $\phi$,
is non-negative and normalized by $\sum \phi=1$. In this case, the convolution
power $\phin$ of $\phi$ is the probability distribution of the associated
random walk after $n$-steps and estimating $\phin$ is a well studied problem.
But what exactly is the role of ``positivity'' in classical results?

This article focuses mostly on the simplest case, convolutions on the integer
group $\mathbb Z$. It reviews what is known (and why people worked on this
question before) and provides some new results including an extension of a
Markov chain estimate due to Carne and Varopoulos. In forthcoming papers,
we will consider the case of
$\mathbb Z^d$, which is significantly different from the $1$-dimensional case,
and the case of non-commutative groups where completely different
techniques must be used and many open questions remain.

Let $\phi$ be a finitely supported probability measure on the integers $\bbz$.
Define convolution powers of $\phi$ by
\begin{equation*}
\phin(x)=\sum_y\phi^{(n-1)}(y)\phi(x-y).
\end{equation*}
The asymptotic ``shape'' of $\phin$ is described by the local
central limit theorem (assuming irreducibility and aperiodicity):
\begin{equation}
\phin(x)=\frac1{\sqrt{2\pi\sigma ^2n}}e^{-(x-\alpha n)^2/(2\sigma^2n)}+o\left(\frac1{\sqrt{n}}\right).
\label{11}
\end{equation}
In \eqref{11}, $\sigma^2=\sum x^2\phi(x)$, $\alpha=\sum x\phi(x)$,
and the error term is uniform over $x\in\bbz$. Proofs and many refinements are in \cite{bhatta,petrov}.

\begin{figure}[htb]
\centering
\includegraphics[keepaspectratio, width=.5\linewidth]{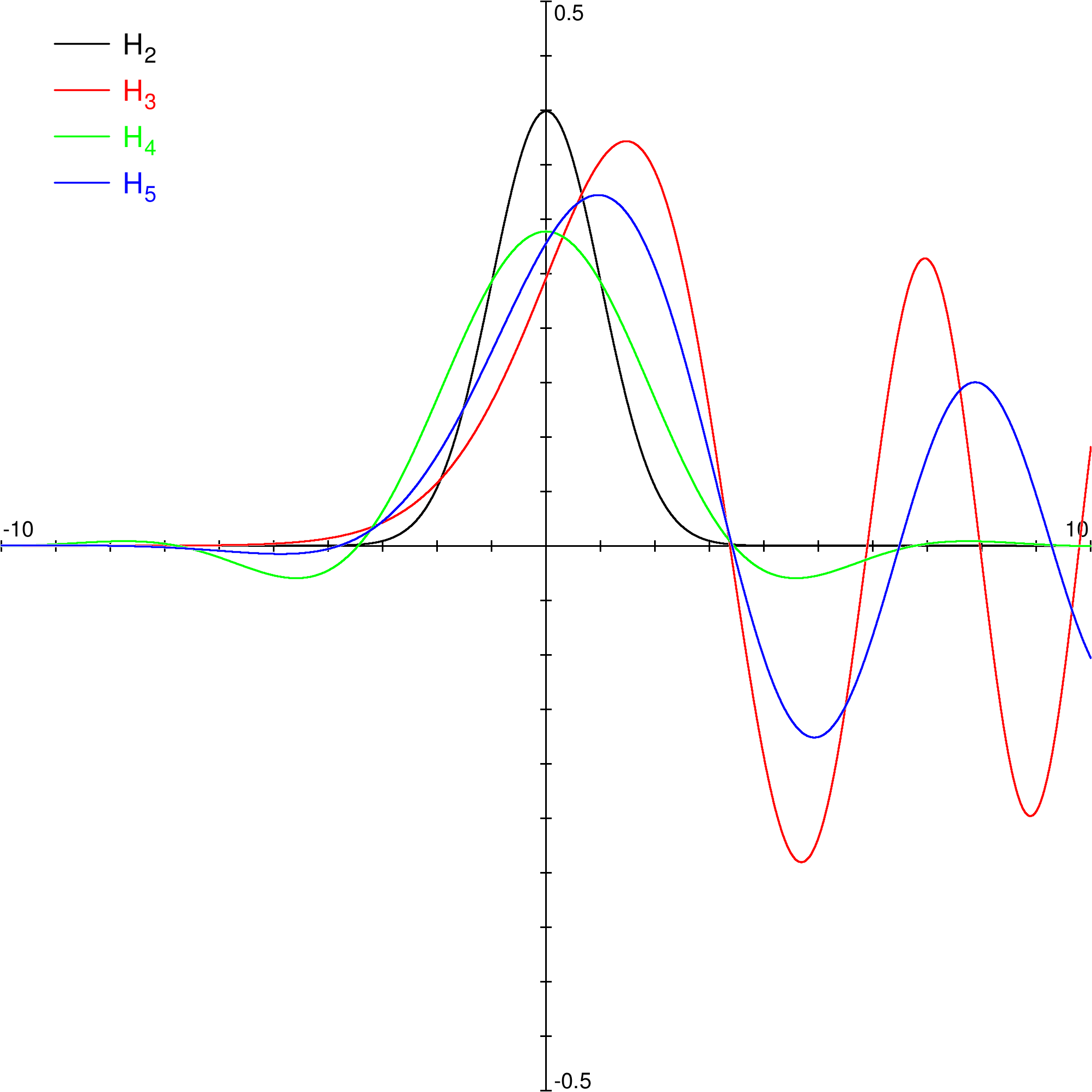}
\caption{}
\label{fig1}
\end{figure}

For this and other basic theorems of probability, could
there be similar results if $\phi(x)$ is a more general function?
For example, consider
\begin{equation}
\phi(-2)=\phi(2)=-\frac1{9},\quad\phi(1)=\phi(-1)=\frac49,\quad\phi(0)=\frac13\quad(\phi(x)=0\text{ otherwise}).
\label{12}
\end{equation}
The convolution powers $\phin$ are well-defined and results of Schoenberg \cite{Scho-Bull} imply
\begin{equation}
\phin(x)=\frac1{(n/9)^{1/4}}H_4\left(\frac{x}{(n/9)^{1/4}}\right)+o\left(\frac1{n^{1/4}}\right)
\label{13}
\end{equation}
with $H_4$ the real even function having Fourier transform $e^{-x^4}$.

More generally, $H_k$ is defined as having Fourier transform $e^{ -\xi^k}$ for even $k$ and  $e^{i\xi^k}$ for odd $k$.
 A graph of $H_4$ appears in \ref{fig1}. Also shown are the Gaussian density $H_2(x)=\frac1{\sqrt{4\pi}}e^{-x^2/4}$ and $H_3,\ H_5$. The function $H_3$ is the
famous Airy function (up to dilation) and it appears as the convolution limit of measures
such as
\begin{equation}
\phi(0)=1-3a,\quad\phi(1)=3a,\quad\phi(-1)=a,\quad\phi(2)=-a\quad(\phi(x)=0\text{ otherwise})
\label{14}
\end{equation}
for $0<a<1/4$ fixed. Careful statements of general theorems are in \ref{sec-SGT}.
We note that there are fundamental differences between the functions
$H_{2k}$ and the function $H_{2k+1}$. An exact formula exists only for $H_2$ but
$H_{2k}$ is even with $H_{2k}(0)>0$ and satisfies
\begin{equation}\label{H2k}
\forall x,\;\;|H_{2k}(x)|\le C_k\exp( -c_k|x|^{2k/(2k-1)}).
\end{equation}
In particular, $H_{2k}$ is in $L^1(\mathbb R)$. The functions $H_{2k+1}$ satisfy
$H_{2k+1}(0)>0$ but are not even, decay differently along the negative
and positive semi-axes
and are not absolutely integrable. For any $k>2$, $H_k$ changes sign infinitely many times.

The following theorem illustrates the main result of this
paper. It gives an exponential upper bound of the type (\ref{H2k})
for the convolution powers
of certain complex valued functions on $\bbz$. Such bounds are implicit in
the literature on finite difference methods. See \cite{Thomee1,Thomee2}.
The second part of the theorem gives lower bounds on the
real and imaginary parts in an appropriate neighborood of $0$.

\begin{theo}
Let $\phi(x)$ be a finitely supported complex-valued function on $\bbz$ with $\sum_x\phi(x)=1$. Assume that $\hphi(\theta)=\sum_x\phi(x)e^{ix\theta}$ satisfies
\begin{equation}
\left|\hphi(\theta)\right|<1\qquad\text{for }\theta\in(-\pi,\pi],\quad\theta\neq0.
\label{15}
\end{equation}
Assume further that there exist an even integer $\nu$ and a complex number $\gamma$ with $\mbox{\em Re}(\gamma)>0$ such that
\begin{equation}
\hphi(\theta)=e^{-\gamma\theta^\nu(1+o(1))}\qquad\text{as }\theta\to0.
\label{16}
\end{equation}
Then, there are constants $c,C\in(0,\infty)$ such that, for all $x\in\bbz,\ n\in\bbn^*$,
\begin{equation}\label{11up}
\left|\phin(x)\right|\leq\frac{C}{n^{1/\nu}}\exp\left(-c\left(\frac{|x|}{n^{1/\nu}}\right)^{\nu/(\nu-1)}\right).
\end{equation}
Further, there are constants $c_1,c_2>0$ such that, for all $x\in \mathbb Z, n\in \mathbb N$ with $|x|\le c_1n^{1/\nu}$, we have
\begin{equation*}
\mbox{\em Re}(\phin(x))\geq  c_2n^{-1/\nu}\end{equation*}
and
\begin{itemize}
\item if $\mbox{\em Im}(\gamma)\neq 0$,
$$|\mbox{\em Im}(\phi(x))|\ge c_2n^{-1/\nu}$$
with $\mbox{\em Im}(\phi^{(n)}(x))$ having the same sign as
$\mbox{\em Im}(\gamma)$;
\item if $\mbox{\em Im}(\gamma)=0$,
$$|\mbox{\em Im}(\phi(x)|)=o(n^{-1/\nu}).$$
\end{itemize}
\label{thm11}
\end{theo}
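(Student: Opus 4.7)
The natural tool is Fourier inversion,
$$\phi^{(n)}(x)=\frac{1}{2\pi}\int_{-\pi}^{\pi}\hat\phi(\theta)^n e^{-ix\theta}\,d\theta.$$
Since $\phi$ is finitely supported, $\hat\phi$ extends to an entire $2\pi$-periodic function of $\theta\in\mathbb C$. I begin by extracting from \eqref{15}, \eqref{16}, and compactness the following three facts: (i) $|\hat\phi|\le 1$ on $[-\pi,\pi]$; (ii) there exist $\delta,c_0>0$ with $|\hat\phi(\theta)|\le e^{-c_0\theta^\nu}$ for $|\theta|\le\delta$; (iii) $|\hat\phi(\theta)|\le\rho<1$ for $\delta\le|\theta|\le\pi$.

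For the upper bound \eqref{11up}, I will deform the contour. By Cauchy's theorem and $2\pi$-periodicity, for any real $\eta$,
$$\phi^{(n)}(x)=\frac{e^{-x\eta}}{2\pi}\int_{-\pi}^{\pi}\hat\phi(\theta-i\eta)^n e^{-ix\theta}\,d\theta.$$
I then choose $\eta=\varepsilon\,\mathrm{sign}(x)(|x|/n)^{1/(\nu-1)}$ with a small $\varepsilon>0$, which produces the prefactor $\exp(-c|x|^{\nu/(\nu-1)} n^{-1/(\nu-1)})$. Using a Taylor expansion of $\log\hat\phi$ about $0$ furnished by \eqref{16}, I expect to show that, for $\varepsilon$ small enough, $|\hat\phi(\theta-i\eta)|\le\exp(-c'\theta^\nu+C\eta^\nu)$ on $|\theta|\le\delta$, and $|\hat\phi(\theta-i\eta)|\le\tilde\rho<1$ on $\delta\le|\theta|\le\pi$. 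Taking the $n$th power and integrating in $\theta$ contributes a further factor of order $n^{-1/\nu}$; the error $e^{Cn\eta^\nu}$, which is comparable to $\exp(C|x|^{\nu/(\nu-1)}n^{-1/(\nu-1)})$, can be absorbed into the main gain by choosing $\varepsilon$ small enough. For $|x|$ larger than $nM$ (with $M$ the diameter of $\mathrm{supp}\,\phi$) the quantity $\phi^{(n)}(x)$ vanishes, so the bound is trivial there.

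For the lower bound on $|x|\le c_1n^{1/\nu}$, I rescale by setting $\xi=n^{1/\nu}\theta$ and $y=x/n^{1/\nu}$. On $|\xi|\le\delta n^{1/\nu}$ \eqref{16} gives $\hat\phi(\xi n^{-1/\nu})^n=e^{-\gamma\xi^\nu(1+o(1))}$ uniformly on compact $\xi$-sets, while on $|\xi|>\delta n^{1/\nu}$ the integrand is $O(\rho^n)$. Therefore
$$n^{1/\nu}\phi^{(n)}(x)=\frac{1}{2\pi}\int_{\mathbb R}e^{-\gamma\xi^\nu-iy\xi}\,d\xi+o(1)=:G(y)+o(1),$$
uniformly in $y$ on compacts. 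The limit $G$ is continuous, and a standard contour evaluation gives $G(0)=(\nu\pi)^{-1}\Gamma(1/\nu)\,\gamma^{-1/\nu}$. Since $|\arg\gamma|<\pi/2$ and $\nu\ge2$, $|\arg\gamma^{-1/\nu}|<\pi/(2\nu)\le\pi/4$, so $\mathrm{Re}\,G(0)>0$, while $\mathrm{Im}\,G(0)$ vanishes precisely when $\mathrm{Im}\,\gamma=0$ and otherwise has a sign fixed by that of $\mathrm{Im}\,\gamma$. Choosing $c_1>0$ small so that $|y|\le c_1$ preserves $\mathrm{Re}\,G(y)\ge\tfrac12\mathrm{Re}\,G(0)$, and similarly for the imaginary part when $\mathrm{Im}\,\gamma\ne0$, yields the stated lower bounds; the case $\mathrm{Im}\,\gamma=0$ follows from continuity, since then $G(0)\in\mathbb R$ and $\mathrm{Im}\,G(y)\to 0$ as $y\to 0$.

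The hardest step will be the uniform bound on $|\hat\phi(\theta-i\eta)|^n$ after the contour shift. The difficulty is most visible when $\nu\equiv 2\pmod 4$, where the leading correction to $\mathrm{Re}(\gamma(\theta-i\eta)^\nu)$ has an unfavorable sign in the regime $|\theta|\ll|\eta|$. Balancing the gain $e^{-c'\theta^\nu}$ coming from $\theta^\nu$ against the cost $e^{C\eta^\nu}$ created by the shift is what fixes the optimal value of $\eta$ and gives rise to the exponent $\nu/(\nu-1)$, in the spirit of the Carne--Varopoulos estimate for Markov chains.
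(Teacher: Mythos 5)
Your proposal is correct, and for the upper bound it follows a genuinely different route than the paper's. To prove \eqref{11up}, the paper writes $P_n(\theta)=P(\theta/n^{1/\nu})^n$, performs $q$ integrations by parts, and then invokes Cauchy's formula for derivatives to establish the key estimate $|\partial^q_\theta P_n(\theta)|\le C_1^{1+q}q!\,q^{-q/\nu}e^{-a_1\theta^\nu}$ (Proposition~\ref{pro-Gauss}); the exponent $\nu/(\nu-1)$ then emerges by optimizing over the number $q$ of derivatives via $\inf_q M^{-q}q^{(1-1/\nu)q}\le Ce^{-cM^{\nu/(\nu-1)}}$. You instead shift the contour by $i\eta$ --- which is valid here because $\hat\phi$ is $2\pi$-periodic and entire, $x\in\mathbb Z$, and the vertical sides cancel --- and optimize over the shift $\eta$, which is the classic Carne--Varopoulos mechanism. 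Both approaches exploit the entire extension of $\hat\phi$ furnished by finite support, and both reduce to a sub-Gaussian optimization; but the paper's derivative-based route also yields, with almost no extra work, the discrete regularity bounds of Theorem~\ref{Gauss2}, which would require additional bookkeeping in your framework. Your handling of the $\nu\equiv2\pmod 4$ case is sound: the unfavorable sign of $(-i)^\nu$ only inflates the constant $C$ in $|\hat\phi(\theta-i\eta)|\le e^{-c'\theta^\nu+C\eta^\nu}$, and choosing $\varepsilon$ small enough in $\eta=\varepsilon\,\mathrm{sign}(x)(|x|/n)^{1/(\nu-1)}$ still makes the gain $e^{-x\eta}$ dominate $e^{Cn\eta^\nu}$. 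For the second part, your rescaling argument deriving $n^{1/\nu}\phi^{(n)}(x)\to G(y)$ is essentially the same computation that underlies the paper's Theorem~\ref{th-extLL} (to which the paper delegates). One small point worth noting: your formula $G(0)=(\nu\pi)^{-1}\Gamma(1/\nu)\gamma^{-1/\nu}$ gives $\mathrm{Im}\,G(0)$ the \emph{opposite} sign to $\mathrm{Im}\,\gamma$; this is in fact correct and reveals what appears to be a sign slip in the theorem statement (and in the paper's remark following Theorem~\ref{th-extLL}). Your deliberately cautious phrasing --- ``a sign fixed by that of $\mathrm{Im}\,\gamma$'' --- avoids committing to the erroneous claim.
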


The first part of Theorem \ref{thm11} is proved in \ref{sec4}.
The second part follows from a local limit theorem derived in \ref{sec-ext}.
This local theorem gives further information about $\phin$ in the region
$|x|\le c_1n^{1/\nu}$. Each of these results is obtained  using Fourier
transform techniques
The conclusions of Theorem \ref{thm11} can be described roughly as follows.
Under the given hypothesis, $|\phi^{(n)}(x)|$ attains a  maximum of order
$1/n^{1/\nu}$ and is bounded below by $1/n^{1/\nu}$ in
a $n^{1/\nu}$-neighborhood of $0$. Further, as $x$ moves
away from $0$, $|\phin(x)|$ presents a relatively fast decay controlled  by
$$ \exp
\left(-c\left(\frac{|x|}{n^{1/\nu}}\right)^{\nu/(\nu-1)}\right).$$

In modern random walk theory, when $\phi$ is a centered probability
and $\nu$ can only take the value $\nu=2$,
the behaviors described in Theorem \ref{thm11} are often discussed under
the names of ``diagonal behavior''
(more precisely, ``near diagonal behavior'') and
``off-diagonal behavior''. In particular,
in the theory of random walks on graphs,
long range off-diagonal upper bounds of the type
$ e^{-cd(x,y)^2/n}$ are known as Carne--Varopoulos bounds
(here $d(x,y)$ denotes the
natural graph distance). In Section \ref{sec5},
we explain how Theorem \ref{thm11} leads to generalizations
of Carne-Varopoulos bounds.

In general,
it is not true that the $\ell_1$-norm of the convolution powers
$\phin$ of a complex valued function $\phi$ with $\sum \phi=1$
stay bounded uniformly in $n$.
However, as an immediate corollary of Theorem \ref{thm11} we recover
the following known result.
\begin{cor}Under the hypotheses of {\em Theorem \ref{thm11}}
there exists a constant $C$ such that
$$\forall\, n,\;\;\sum_{x\in \mathbb Z}|\phi^{(n)}(x)|\le C<\infty.$$
\end{cor}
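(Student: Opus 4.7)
The plan is to deduce the corollary directly from the upper bound \eqref{11up} in Theorem \ref{thm11} by a standard Riemann sum comparison. Indeed, \eqref{11up} gives
$$\sum_{x\in\mathbb Z}|\phi^{(n)}(x)|\le \frac{C}{n^{1/\nu}}\sum_{x\in\mathbb Z}\exp\left(-c\left(\frac{|x|}{n^{1/\nu}}\right)^{\nu/(\nu-1)}\right),$$
so everything reduces to controlling the sum on the right by a constant independent of $n$.

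To do this, I would set $u_x = x/n^{1/\nu}$, so that consecutive points $u_x$ are spaced by $\Delta = n^{-1/\nu}$. The function $f(u)=\exp(-c|u|^{\nu/(\nu-1)})$ is positive, even, and unimodal, decreasing on $[0,\infty)$. Consequently, a standard integral-comparison argument for decreasing functions yields
$$\frac{1}{n^{1/\nu}}\sum_{x\in\mathbb Z} f(u_x)\le f(0)\cdot n^{-1/\nu} + \int_{\mathbb R} f(u)\,du.$$
Since $\nu\ge 2$ we have $\nu/(\nu-1)>1$, so the integral $\int_{\mathbb R}\exp(-c|u|^{\nu/(\nu-1)})\,du$ is finite, and the first term is bounded (in fact, tends to $0$) as $n\to\infty$. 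Combining with the prefactor $C$ in the inequality above gives a uniform bound on $\sum_x|\phi^{(n)}(x)|$.

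There is no real obstacle here: the only point to be careful about is the Riemann-sum versus integral comparison, and unimodality of $f$ handles this cleanly. The substance of the corollary is entirely contained in the fast off-diagonal decay already established by \eqref{11up}; the corollary is essentially the integrated version of that pointwise bound.
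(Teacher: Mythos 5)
Your proof is correct and takes exactly the approach the paper intends: the paper states the corollary is ``an immediate corollary of Theorem \ref{thm11}'' without spelling out a proof, and your Riemann-sum comparison of the bound \eqref{11up} with the convergent integral $\int_{\mathbb R}\exp(-c|u|^{\nu/(\nu-1)})\,du$ (noting $\nu/(\nu-1)>1$) is the standard way to make that immediate deduction precise.
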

This important property is studied in detail in
\cite{Aronson,Thomee1,Thomee2,wid65} because
of its close connections to the ``stability'' of certain approximation schemes
for partial differential equations. See Theorem \ref{oldthmA11}
below and the short discussion in \ref{sec22}.

The hypotheses made in Theorem \ref{thm11} are somewhat mysterious.
Indeed, it is very unclear how to replace these hypotheses in the context of
non-commutative countable groups (for which no viable Fourier transform exists,
in general). Even on $\mathbb Z$, these hypotheses
are not entirely natural since the Fourier transform $\hat\phi$ of a
function $\phi$ with $\sum_x\phi(x)=1$ may well attain its maximum
at multiple points and not at $0$. Sections 2 and 3 discuss this in detail.

We end this introduction with a brief description of the content of the paper.
Section 2 is devoted to local limit theorems for convolution of complex valued functions. In particular, \ref{sec-ext}
provides an extension of earlier results of Schoenberg and Greville
and \ref{sec-exa} describes illustrative examples. Section 3 is the main
section of this paper and  treats
upper bounds of the type (\ref{11up}).
See Theorems \ref{Gauss1} and \ref{Gauss4}.
In Section 4, the results of Section 3
and the discrete transmutation formula of Carne are used to obtain
long range estimates
for the powers of the operator  $M_k=I-(I-M)^k$ when $M$
is the infinite matrix of a reversible Markov chain
(more generally, an hermitian contraction).  Section 5 gives pointers
to various earlier works where convolutions of signed measures play an
explicit or implicit part.

\section{The results of Shoenberg, Greville and Thom\'e} \label{sec-SGT}
In this short section, we briefly review results by
Schoenberg, Greville and Thom\'ee. Schoenberg and Greville where motivated
by the earlier work of de Forest and statistical data smoothing procedures.
Thom\'ee's work is motivated by numerical approximation schemes
for differential equations. Brief explanations regarding the motivations of
these authors and other applications
are  collected in Section 5.

\subsection{Stability} The largest body of work concerning the problem discussed in this paper can be found in the literature concerning numerical approximation schemes for differential equations with work by  John \cite{john}, Aronson \cite{Aronson}, and Widlund \cite{wid65}.
 Thom\'e's articles \cite{Thomee1,Thomee2} give excellent pointers to the literature. However, these papers do not isolate or emphasize the basic convolution aspect
of the results. This makes extracting the relevant results somewhat difficult. Nevertheless, the following essential result is explicitly stated in \cite{Thomee2}. Given a complex valued absolutely summable function $\phi$ on $\mathbb Z$, consider the property
\begin{equation}
\forall\,n,\qquad\sum_x|\phi^{(n)}(x)|\leq C<\infty.
\label{thomee}
\end{equation}
\begin{theo}[{See \cite[Theorem 7.54]{Thomee2}}]
Condition \eqref{thomee} is satisfied if and only one or the other of the following two conditions is satisfied:
\begin{enumerate}
\item $\hphi(\xi)=\zeta e^{iy\xi}$ for some $y\in\mathbb Z$ and $\zeta\in\mathbb C$ with $|\zeta|=1$
\item $|\hphi(\xi)|<1$ except for at most a finite number of points $\xi_q$, $q\in\{1,\dots,Q\}$ in $|\xi|\leq\pi$ where $|\hat a(\xi_q)|=1$, and there are constants $\alpha_q,\beta_q,\nu_q$, $q=1,\dots,Q$, with $\alpha_q$ real, Re($\beta_q)>0$ and $\nu_q$ an even natural integer, such that
\begin{equation*}
\hphi(\xi_q+\xi)=\hphi(\xi_q)
\exp\left(i\alpha_q\xi-\beta_q\xi^{\nu_q}\left(1+o(1)\right)\right)\qquad\text{as }\xi\to0.
\end{equation*}
\end{enumerate}
\label{oldthmA11}
\end{theo}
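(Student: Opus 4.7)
The proof decomposes naturally into the two implications of the ``iff''. For the implication (1) or (2) $\Rightarrow$ \eqref{thomee}, case (1) is immediate: $\hat\phi = \zeta e^{iy\xi}$ forces $\phi = \zeta \delta_{-y}$, so $\phi^{(n)} = \zeta^n \delta_{-ny}$ and $\|\phi^{(n)}\|_{\ell^1} = 1$ for every $n$. For case (2), my plan is to fix a smooth partition of unity $\chi_0, \chi_1, \dots, \chi_Q$ on $(-\pi,\pi]$, with each $\chi_q$ ($q \ge 1$) supported in a small neighborhood of $\xi_q$ and $\chi_0$ supported where $|\hat\phi| \le 1 - \delta$, and to decompose $\phi^{(n)} = \sum_q I_q^{(n)}$ where $I_q^{(n)}$ is the inverse Fourier transform of $\chi_q \hat\phi^n$. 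The piece $I_0^{(n)}$ is handled by repeated integration by parts, giving $\|\chi_0 \hat\phi^n\|_{C^M} \le C_M n^M (1-\delta)^n$ and hence $\|I_0^{(n)}\|_{\ell^1(\bbz)} \to 0$. Near each $\xi_q$, inserting the local expansion from (2) and rescaling $\xi = n^{-1/\nu_q}\eta$ leads to the approximation
\[
I_q^{(n)}(x) \;\approx\; \hat\phi(\xi_q)^n\, e^{-i\xi_q x}\, n^{-1/\nu_q}\, H_q\!\left(\frac{x + n\alpha_q}{n^{1/\nu_q}}\right),
\]
where $H_q$ has Fourier transform $e^{-\beta_q \eta^{\nu_q}}$. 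Because $\nu_q$ is even and $\mathrm{Re}(\beta_q)>0$, $H_q$ satisfies the decay estimate (\ref{H2k}) and lies in $L^1(\mathbb R)$; a Riemann-sum comparison then yields $\sum_{x} |I_q^{(n)}(x)| \le \|H_q\|_{L^1(\mathbb R)} + o(1)$ uniformly in $n$, and summing over the finitely many $q$ closes the estimate.

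For the converse, the bound $|\hat\phi(\xi)|^n = |\widehat{\phi^{(n)}}(\xi)| \le \|\phi^{(n)}\|_{\ell^1} \le C$ forces $|\hat\phi| \le 1$ everywhere. Let $E := \{\xi \in (-\pi,\pi] : |\hat\phi(\xi)| = 1\}$. If $E = (-\pi,\pi]$, then $\hat\phi$ is a unimodular element of the Wiener algebra $A(\mathbb T)$ with uniformly power-bounded $A(\mathbb T)$-norms; writing $\hat\phi = e^{ih}$ for a continuous argument $h$, the Beurling--Helson theorem forces $h(\xi) = y\xi + c$ for some integer $y$ and real $c$, giving case (1). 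Otherwise $E$ is a proper closed subset of $(-\pi,\pi]$, and around each $\xi_0 \in E$ I would write $\hat\phi(\xi_0 + \xi) = \hat\phi(\xi_0)e^{g(\xi)}$ with $g(0) = 0$ and $\mathrm{Re}\,g \le 0$, so that the task reduces to extracting the asserted form of $g$ and the finiteness of $E$ from the $\ell^1$ bound alone.

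This last extraction is the principal obstacle and the technical heart of the proof. I would proceed by contradiction, using sharp lower-bound companions to the local limit theorem of \ref{sec-ext}: if at some $\xi_0 \in E$ the leading real-part exponent of $g$ had non-integer order, or vanished to infinite order, scaled inverse Fourier transforms localized near $\xi_0$ would produce windows in $x$ on which $|\phi^{(n)}(x)|$ follows a pattern whose summed modulus grows (at least logarithmically) in $n$, violating \eqref{thomee}. The parity of $\nu$ is easier: an odd leading real-part exponent is immediately incompatible with $\mathrm{Re}\,g \le 0$ holding on both sides of $\xi_0$. Finally, if $E$ were infinite or had an accumulation point, pairwise disjoint localized contributions of the $\xi_q$ would each add a unit-order term to $\|\phi^{(n)}\|_{\ell^1}$, again blowing up the bound. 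Only the form in (2) with finitely many $\xi_q$ survives.
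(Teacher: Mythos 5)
The paper itself does not prove this theorem: it is stated as a cited result from Thom\'ee's survey, and the only piece the paper establishes independently is the implication \emph{(2) implies} \eqref{thomee}, as the remark after Theorem~\ref{Gauss5} makes explicit (``we recover the positive part of the stability theorem''). So there is no internal proof to compare yours against, but the proposal can still be evaluated on its own terms.

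Your forward direction is essentially sound. Case (1) is immediate (though with the convention $\hphi(\xi)=\sum_x\phi(x)e^{ix\xi}$ you get $\phi=\zeta\delta_y$, not $\zeta\delta_{-y}$). Case (2) is a partition-of-unity-plus-rescaling argument in the same spirit as Theorem~\ref{Gauss5}; to make it rigorous you must upgrade the ``$\approx$'' to a pointwise bound with summable error, e.g.\ $|I_q^{(n)}(x)|\le C_N n^{-1/\nu_q}(1+|x+n\alpha_q|/n^{1/\nu_q})^{-N}$ for some $N>1$ uniformly in $n$, which is exactly what estimate \eqref{estN} supplies. Your Beurling--Helson observation for the degenerate case $|\hphi|\equiv 1$ in the converse is correct and a genuinely nice touch.

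The converse is, however, where the proposal is incomplete. The claim that $\ell^1$-power-boundedness forces $E=\{|\hphi|=1\}$ to be finite and forces the local expansion at each $\xi_q$ to have real drift $\alpha_q$, even order $\nu_q$, and $\mathrm{Re}\,\beta_q>0$ is the whole content of the converse, and you explicitly leave it as a sketch-by-contradiction without carrying out any of the estimates. Worse, there is a gap that precedes any estimate: the theorem as transcribed here only assumes $\phi$ absolutely summable, under which $\hphi$ is merely continuous, and a continuous $\hphi$ need not have any asymptotic expansion in powers of $\xi$ at a point of $E$ --- so the step ``write $\hphi(\xi_0+\xi)=\hphi(\xi_0)e^{g(\xi)}$ and extract the form of $g$'' cannot even be formulated. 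Thom\'ee's Theorem~7.54 is stated for (rational) trigonometric symbols, for which $\hphi$ is analytic, $E$ is automatically finite, and the expansion exists a priori; the converse relies on this analyticity in an essential way. To repair the proposal you would need either to restrict to finitely supported $\phi$ (the setting of the rest of the paper) so that analyticity is available, or to produce a separate argument showing that $\ell^1$-power-boundedness itself forces the required regularity --- and then actually carry out the contradiction estimates you only gesture at.
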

Thom\'ee also describes what happens when the principal nonlinear term of
the expansion at a point is imaginary (of odd or even degree).
In particular, if $|\hphi|\le 1$, $\hphi(0)=1$ and $0$ is  the unique point where $|\hphi|$ is maximum with
\begin{equation*}
\hphi(\xi)=\exp\left(i\alpha\xi+i\gamma\xi^\mu-\beta\xi^\nu\left(1+o(1)\right)\right)\qquad\text{as }\xi\to0
\end{equation*}
with $\alpha,\gamma$ real, $\gamma\neq0$, Re$(\beta)>0$, $0<\mu\leq\nu$ and $\nu$ an even integer, \cite[(7.9)]{Thomee2} gives
\begin{equation}\label{Thomest}
\sum_y|\phi^{(n)}(y)|\simeq n^{(1-\mu/\nu)/2}.
\end{equation}
Here, $\simeq $ means that the ratio of the two sides stays between two positive constants as $n$ tends to infinity.

Condition \eqref{thomee} is called a stability condition. In the case of smoothing, it shows that the iterated smoother is continuous from $\ell^\infty$ to $\ell^\infty$; small changes in the input sequence lead to close smoothed sequences, uniformly in the iteration number $n$. In the case of divided difference schemes, it shows that the scheme applied to bounded data gives bounded output, uniformly in time.

\subsection{De Forest local limit theorems}
Following de Forest, Schoenberg \cite{Scho-Bull} and Greville \cite{Grev2} explore local limit theorems
for real valued  $\phi$ that are allowed to change sign and are normalized by
$\sum\phi=1$. Schoenberg treats the case of symmetric $\phi$
(i.e., $\phi(x)=\phi(-x)$) under the assumption that
$|\hphi(\theta)|<1$ for $\theta\in (0,2\pi)$ and
$\phi(\theta)=1-\lambda \theta^{k}+0(|\theta|^{2k+1})$
(with, by necessity, $\lambda>0$).  Greville observes that symmetry is not
essential and also treats the case when $\hphi(\theta)=1+a i\theta^{2k+1}+
O(|\theta|^{2k+2}).$

For each positive integer $k$, let
$H_k$ denote the function of one real variable defined by
$\hat{H}_k(\xi)=e^{ -\xi^k}$ for even $k$ and $\hat{H}_k(\xi)=e^{i\xi^k}$ for odd $k$.

\begin{theo}[Greville \cite{Grev2} and Schoenberg \cite{Scho-Bull}, in the spirit of de Forest]
Let $\phi$ be a real summable function on $\mathbb Z$ with Fourier transform $\hphi$ satisfying $|\hphi(\theta)|<1$ for all $\theta\in(0,2\pi)$ and
\begin{equation}
\hphi(\theta)=1+a(i\theta)^m+O(|\theta|^{m+1})\qquad\text{with }0\neq a\in \mathbb R \mbox{ and } m \mbox{ an integer greater than }1.
\label{hyp}
\end{equation}
\begin{itemize}
\item If $m$ is even then $-ai^m=\lambda$ must be positive and we have
\begin{equation*}
\phin(x)=(\lambda n)^{-1/m}H_m\left( x(\lambda n)^{-1/m}\right)+o(n^{-1/m}),
\end{equation*}
uniformly over $x\in \mathbb Z$.
\item If $m$ is odd and $\epsilon$ denotes the sign of the real $ai^{m-1}$, $\epsilon=\emph{sign}(ai^{m-1})$, then we have
\begin{equation*}
\phin(x)=(|a|n)^{-1/m}H_m\left(\epsilon x(|a| n)^{-1/m}\right)+o(n^{-1/m}).
\end{equation*}
\end{itemize}
\label{oldthmB23}
\end{theo}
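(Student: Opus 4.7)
The natural tool is Fourier inversion:
\[
\phi^{(n)}(x) = \frac{1}{2\pi}\int_{-\pi}^{\pi} \hat\phi(\theta)^n e^{-ix\theta}\,d\theta.
\]
I would split the integration at $|\theta|=\delta_n$, with $\delta_n\to 0$ chosen slowly enough that $c_n\delta_n\to\infty$ (where $c_n$ is the rescaling factor introduced below) yet fast enough that the local remainder in $\log\hat\phi$ becomes uniformly negligible. On the tail $\delta_n\leq|\theta|\leq\pi$, the hypothesis $|\hat\phi|<1$ on $(0,2\pi)$ gives, by continuity and compactness, $|\hat\phi(\theta)|\leq 1-\eta$ outside any fixed neighborhood of $0$, while inside that neighborhood the expansion \eqref{hyp} produces $|\hat\phi(\theta)|\leq 1-c|\theta|^{m'}$ for some $m'\geq m$. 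Here $m'=m$ in the even case (where $-ai^m=\lambda>0$ is forced by $|\hat\phi|\leq 1$), and $m'>m$ in the odd case, because then $a(i\theta)^m$ is purely imaginary and $|\hat\phi|<1$ forces the real part of the next nonvanishing term to be strictly negative. Raising to the $n$-th power makes the tail exponentially small, uniformly in $x$ since $|e^{-ix\theta}|=1$, and in particular $o(n^{-1/m})$.

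On the central part $|\theta|\leq\delta_n$ I would take the principal branch of the logarithm and write $\hat\phi(\theta)^n=\exp(n\log\hat\phi(\theta))$ with $\log\hat\phi(\theta)=a(i\theta)^m+o(|\theta|^m)$, then substitute $\xi=c_n\theta$, where $c_n=(\lambda n)^{1/m}$ in the even case and $c_n=(|a|n)^{1/m}$ in the odd case. This converts $na(i\theta)^m$ into $-\xi^m$ (even) or into $i\epsilon\xi^m$ with $\epsilon=\mathrm{sign}(ai^{m-1})$ (odd), while $n\cdot o(|\theta|^m)$ becomes a uniform $o(|\xi|^m)$ on $|\xi|\leq c_n\delta_n$. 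In the even case the central integral reads
\[
\frac{1}{2\pi c_n}\int_{|\xi|\leq c_n\delta_n}\exp\bigl(-\xi^m(1+o(1))\bigr)\,e^{-i(x/c_n)\xi}\,d\xi,
\]
and dominated convergence against the integrable majorant $e^{-\xi^m/2}$ yields uniform-in-$x$ convergence to $c_n^{-1}H_m(x/c_n)=(\lambda n)^{-1/m}H_m(x(\lambda n)^{-1/m})$, settling the even case.

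The main obstacle is the odd case: $H_{2k+1}$ is defined only by a conditionally convergent oscillatory integral ($\widehat{H}_m(\xi)=e^{i\xi^m}\notin L^1$), so dominated convergence fails outright. My plan is to split the rescaled central integral at $|\xi|=1$, handle the inner part by dominated convergence as above, and on the outer part $1\leq|\xi|\leq c_n\delta_n$ integrate by parts against $\tfrac{d}{d\xi}e^{i\epsilon\xi^m}=i\epsilon m\xi^{m-1}e^{i\epsilon\xi^m}$. Each integration by parts transfers a derivative onto the slowly varying factor $\exp(o(|\xi|^m))\,e^{-i(x/c_n)\xi}$ and earns a factor $|\xi|^{-(m-1)}$, eventually rendering the integrand absolutely integrable; the boundary contributions at $|\xi|=c_n\delta_n$ vanish because the integrand is bounded while carrying $|\xi|^{-(m-1)}$. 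What makes this viable is that $|\exp(o(|\xi|^m))|\leq 1$ throughout (since $|\hat\phi|\leq 1$), so the integration by parts is not derailed by growing exponential factors, and dominated convergence then identifies the limit as $c_n^{-1}H_m(\epsilon x/c_n)=(|a|n)^{-1/m}H_m(\epsilon x(|a|n)^{-1/m})$, uniformly in $x$.
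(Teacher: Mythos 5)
The paper does not actually prove this theorem: it is attributed to Greville and Schoenberg, and the extended local limit theorem that the paper does prove (Theorem \ref{th-extLL}) covers only even $m$. So your even-case argument can be compared against the paper's proof of Theorem \ref{th-extLL}, while your odd-case sketch has to stand on its own.

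For even $m$ your proposal is essentially the paper's argument: kill the tail via the modulus bound $|\hphi(\theta)|\le 1-c\theta^m$ near $0$ (which is exponentially small after raising to the $n$-th power), rescale by $c_n=(\lambda n)^{1/m}$, and use dominated convergence with the $L^1$ majorant $e^{-\xi^m/2}$. Uniformity in $x$ comes for free because the bound on the difference of integrands does not involve $x$. This is correct.

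The odd case has a genuine gap: the claimed conclusion is uniform over all $x\in\bbz$, but your integration-by-parts estimate is not. After writing the integrand on $1\le|\xi|\le c_n\delta_n$ as $e^{i\epsilon\xi^m}(e^{E_n(\xi)}-1)e^{-i(x/c_n)\xi}$ with $E_n(\xi)=n\log\hphi(\xi/c_n)-i\epsilon\xi^m$, each integration by parts against $\partial_\xi e^{i\epsilon\xi^m}$ transfers a $\partial_\xi$ onto $(e^{E_n}-1)e^{-i(x/c_n)\xi}$. Whenever that derivative falls on $e^{-i(x/c_n)\xi}$ it produces a factor $-ix/c_n$ with no compensating decay, so the resulting bound carries an uncancelled factor $|x|/c_n$ (and higher powers after further integrations by parts) and is therefore not $o(1)$ uniformly in $x$. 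The same defect appears in estimating the truncation error $\int_{|\xi|>c_n\delta_n}e^{i\epsilon\xi^m-i(x/c_n)\xi}\,d\xi$. This is not a cosmetic problem: for odd $m$, $H_m$ decays only polynomially on one side, and the statement is precisely a claim that $\phin$ tracks this slow, asymmetric decay uniformly; establishing that requires a separate analysis (stationary-phase or van der Corput estimates) in the regime $|x|\gtrsim n^{1/m}$, which your sketch does not supply. This is exactly the difficulty the paper flags when it remarks that the odd-$m$ proof is ``significantly more difficult'' than the even-$m$ one. A secondary, smaller issue is that the hypothesis only provides $\hphi(\theta)=1+a(i\theta)^m+O(|\theta|^{m+1})$ as a $C^0$-type expansion, so the existence and control of $\partial_\xi E_n$ that your integration by parts quietly uses is itself something that needs justification.
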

\begin{rem} If we consider the expansion $\hphi(\theta)=\sum_j A_j (i\theta)^j$
then
$A_j=\frac{1}{j!}\sum_x\phi(x)x^j$. Hypothesis (\ref{hyp}) is simply the
assumption that the first $k-1$ moments of $\phi$ vanish
and $a=A_k\neq 0$. Note that $a$ is real here because $\phi$ is real.
\end{rem}
\begin{rem}The hypotheses made in Theorem \ref{oldthmB23}
are somewhat ad hoc. Schoenberg and Greville are interested in data smoothing procedures. This explains the basic hypotheses that $\phi$ is real and satisfies $\sum \phi=1$ but these properties are not essential, at least in the case when $m$ is even. The assumption that $\max|\hphi|= 1$ is actually the essential condition. Coupled with the condition $\sum\phi=1$, it implies that
$|\hphi|$ attains its maximum at $\theta=0$ and that, in fact, $\hphi(0)=1$. Schoenberg and Greville then make the additional assumptions that $|\hphi(\theta)|<1$ if $\theta\neq 0$. The condition (\ref{hyp}) simply captures the vanishing order of $\hphi -1$ at $0$. It excludes the possibility of a drift. In the next section, we extend the even $m$ result to allow  complex $\phi$, a drift, and multiple points where $|\hat\phi|$ attains it maximum.
\end{rem}

\begin{rem}  The proof of Theorem \ref{oldthmB23} is significantly more difficult in the case
where $m$ is odd than in the case when $m$ is even. Indeed, for odd $m=2k+1>1$,
\begin{equation*}
H_{2k+1}(x)=\frac1{\pi}\int_0^\infty\cos(u^{2k+1}-xu)\ du.
\end{equation*}
This integral is not absolutely convergent  and $H_{2k+1}$ is neither integrable nor square integrable. Nevertheless, it is an entire function.
\end{rem}

\subsection{An extended local limit theorem in the even case}\label{sec-ext}
Theorem \ref{oldthmB23} is taken from \cite{Grev2}. Its form is somewhat constrained, perhaps due to the applications that Greville had in mind. Comparison with Theorem \ref{oldthmA11} leads to the following extension where
$H_{2k,b}$ denotes the complex valued Schwartz function
whose Fourier transform is $\hat{H}_{2k,b}(\xi)=e^{-(1+ib)\xi^{2k}}$, $b\in \mathbb R$.

\begin{theo}\label{th-extLL}
Let $\phi$ be a complex absolutely summable function on $\mathbb Z$ with Fourier transform $\hphi$ satisfying $|\hphi(\theta)|\le 1$ for all $\theta\in [-\pi,\pi]$. Assume that $|\hphi(\xi)|<1$ except for at most a finite number of points $\xi_q$, $q\in\{1,\dots,Q\}$, in $|\xi|\leq\pi$ where $|\hphi(\xi_q)|=1$. Assume further that there is an integer $Q_1\le Q$, an even integer $m$ and constants $\alpha_q,\beta_q=b_q+ib'_q$, $q=1,\dots,Q_1$, with $\alpha_q,b_q,b'_q\in \mathbb R$, $b_q=\mbox{\em Re}(\beta_q)>0$ such that
\begin{equation}\label{hypq}
\forall\, q\in \{1,\dots,Q_1\},\;\;
\hphi(\xi_q+\xi)=\hphi(\xi_q)
\exp\left(i\alpha_q\xi -\beta_q\xi^{m}\left(1+o(1)\right)\right)\qquad\text{as }\xi\to 0;
\end{equation}
and
\begin{equation*}\forall\, q\in \{Q_1+1,\dots,Q\},\;\;
|\hphi(\xi_q+\xi)|\le
\exp\left( -c_q|\xi|^{\mu}\right)
\qquad\text{as }\xi\to 0,
\end{equation*}
for some $\mu\in (0,m)$.
Then we have
\begin{equation*}
\phin(x)=\sum_1^{Q_1} (b_qn)^{-1/m}e^{-ix\xi_q}\hphi(\xi_q)^nH_{m,b'_q/b_q}((x-\alpha_q n)(b_qn)^{-1/m})
+o(n^{-1/m}),
\end{equation*}
where the error term is uniform in $x\in \mathbb Z$.
\label{thmGG}
\end{theo}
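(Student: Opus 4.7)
The natural strategy is Fourier inversion combined with Laplace-type asymptotics near each peak of $|\hphi|$. Writing
\[
\phin(x)=\frac{1}{2\pi}\int_{-\pi}^{\pi}\hphi(\theta)^n e^{-ix\theta}\,d\theta,
\]
I would fix a small $\epsilon>0$ and partition $[-\pi,\pi]$ into the $Q$ arcs $N_q=\{\xi_q+\xi:|\xi|<\epsilon\}$ (disjoint for $\epsilon$ small) together with their complement $K$. Since $|\hphi|$ is continuous and strictly less than $1$ on the compact set $K$, its contribution is $O((1-\eta)^n)$ for some $\eta>0$ uniformly in $x$, which is absorbed into the $o(n^{-1/m})$ error.

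For each subcritical peak $q\in\{Q_1+1,\dots,Q\}$, the hypothesis $|\hphi(\xi_q+\xi)|\le e^{-c_q|\xi|^\mu}$ near $\xi=0$ gives, after the change of variable $\xi=(c_qn)^{-1/\mu}v$, a contribution of size $O(n^{-1/\mu})$, which is $o(n^{-1/m})$ since $\mu<m$. For each main peak $q\in\{1,\dots,Q_1\}$ I insert the expansion (\ref{hypq}), factor out $\hphi(\xi_q)^n e^{-ix\xi_q}$, and substitute $\xi=(b_qn)^{-1/m}u$, which transforms the corresponding integral $I_q$ into
\[
I_q=\frac{\hphi(\xi_q)^n e^{-ix\xi_q}}{2\pi(b_qn)^{1/m}}\int_{|u|\le\epsilon(b_qn)^{1/m}} e^{-iy_q u-(1+ib'_q/b_q)u^m(1+o(1))}\,du,
\]
with $y_q=(x-\alpha_qn)(b_qn)^{-1/m}$. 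Pointwise in $u$ the integrand converges to $e^{-iy_qu-(1+ib'_q/b_q)u^m}$, whose integral over $\mathbb{R}$ equals $2\pi H_{m,b'_q/b_q}(y_q)$ by Fourier inversion (using that $m$ is even, so $\hat H_{m,b}$ and hence $H_{m,b}$ is even and the sign of $y_q$ is irrelevant). This produces the stated main term.

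The main obstacle is ensuring that the error estimate is \emph{uniform} in $x\in\mathbb Z$: because $|e^{-iy_q u}|\equiv 1$, there is no cancellation to exploit and one must work purely with modulus bounds. The standard remedy is domination. By shrinking $\epsilon$ so that $\mbox{Re}(\beta_q(1+o(1)))\ge b_q/2$ on $|\xi|<\epsilon$, one obtains $|\hphi(\xi_q+\xi)|^n\le e^{-(b_q/2)n\xi^m}$, which after the substitution becomes the $x$-independent integrable envelope $e^{-(b_q/2)u^m}$. I would then split the $u$-integral at $|u|=n^\delta$ for a small $\delta>0$: on $|u|\le n^\delta$ the $(1+o(1))$ factor is uniformly close to $1$, so the integrand converges uniformly on this region and the truncated integral converges to the full Gaussian-type integral; on $|u|>n^\delta$ the envelope gives a bound vanishing faster than any polynomial in $n$. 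Summing the main contributions from $q\le Q_1$ and absorbing the errors from $K$ and from the subcritical peaks yields the claimed asymptotic uniformly in $x$.
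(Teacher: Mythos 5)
Your proposal is correct and follows essentially the same line as the paper's proof: localize around the peaks, show the off-peak and subcritical-peak contributions are negligible, rescale by $(b_qn)^{-1/m}$ at each main peak, and pass to the limiting $H_{m,b'_q/b_q}$-integral using a domination argument that is manifestly $x$-independent because $|e^{-iy_qu}|\equiv1$. The only cosmetic difference is how the domination is organized: you split at $|u|=n^\delta$, whereas the paper fixes $\eta>0$, shrinks $\epsilon=\epsilon(\eta)$ so that $|\psi_{q,n}(u)-e^{-(1+ib'_q/b_q)u^m}|\le C\eta e^{-u^m/4}$ on $|u|\le\epsilon(b_qn)^{1/m}$, bounds the outer tail by $O(e^{-\epsilon^m b_q n})$, and lets $\eta\downarrow0$ -- a slightly cleaner bookkeeping, and one worth adopting because your phrase ``the integrand converges uniformly on $|u|\le n^\delta$, so the truncated integral converges'' does not by itself justify convergence of an integral over a growing domain; you need to couple the uniform smallness on compacts with the $e^{-u^m/2}$ envelope (which you do have), exactly as in the paper's $\eta$-$\epsilon$ argument.
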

\begin{rem}To make sense of this result, it is necessary to know something about
the functions $H_{2k,\beta}$. By definition,
$$\mbox{Re}(H_{2k,b}(x))=\frac{1}{\pi}\int_0^\infty
(\cos x\xi)(\cos (b\xi^{2k}))e^{-\xi^{2k}}d\xi $$ and
$$\mbox{Im}(H_{2k,b}(x))=\frac{1}{\pi}\int_0^\infty
(\cos x\xi)(\sin (b\xi^{2k}))e^{-\xi^{2k}}d\xi. $$
From these formula, one easily sees that
$\mbox{Re}(H_{2k,b}(0))$ is always positive and that
$\mbox{Im}(H_{2k,b}(0))$ is zero, positive or negative if and only if $b$
is zero, positive or negative.
Further, it holds that
$$|H_{2k,b}(x)|\le C_{k,b} \exp(-c_{k,b}|x|^{\frac{2k}{2k-1}}).$$
In the case $k=1$, we have
$$H_{2,b}(x)= \frac{1}{\sqrt{4\pi (1+ib)}}e^{-\frac{|x|^2}{4(1+ib)}}.$$
This complex valued function of the real variable $x$ is the heat
kernel, i.e., the kernel of $e^{z \Delta}$,
computed at the complex time $z=1+ib$. Here $\Delta$ denotes the unique
self-adjoint extension of $(d/dx)^2$ originally defined
on smooth compactly supported functions.
\end{rem}

\begin{proof}
Set $I=(-\pi,\pi]$. For each $q$, let $I_q=[\xi_q-\epsilon,\xi_q+\epsilon]$, $\epsilon>0$, be a small interval centered around $\xi_q$ and let $J=I\setminus \cup_1^Q I_q$. We have
\begin{eqnarray*}2\pi\phi^{(n)}(x)&= &
\int_{-\pi}^{\pi} e^{-i x\theta} [\hphi(\theta)]^n d\theta\\
&=& \int_J e^{-i x\theta} [\hphi(\theta)]^n d\theta+
\sum_{q=1}^Q \int_{I_q}e^{-i x\theta} [\hphi(\theta)]^n d\theta
=\sum_0^Q \Phi_q
\end{eqnarray*}
where
$$\Phi_0=\int_J e^{-i x\theta} [\hphi(\theta)]^n d\theta
\;\mbox{ and }\;
\Phi_q=\int_{I_q}e^{-i x\theta} [\hphi(\theta)]^n d\theta,\; q=1,\dots, Q.$$
On $J$, there exists $\rho=\rho_\epsilon\in (0,1)$ such that
$|\hphi|\le \rho$ and thus, $|\Phi_0|=O(\rho^n)$.
For $q\in \{Q_1+1,\dots,Q\}$, we have
\begin{eqnarray*}
|\Phi_q|&=& |\int_{-\epsilon}^{\epsilon}e^{-i x(\xi_q+\theta)} [\hphi(\xi_q+\theta)]^n d\theta|\\
&\le&
\int_{-\epsilon}^{\epsilon} e^{-c_q n|\theta|^{\mu}} d\theta\\
&\le& n^{-1/\mu}\int_{-\infty}^\infty
e^{-c_q|u|^\mu}du= o(n^{-1/m}).
\end{eqnarray*}
The main contribution comes from the integrals $\Phi_q$, $q\in \{1,\cdots, Q_1\}$. We set
$$\psi_{q,n}(u)=\hphi(\xi_q)^{-1}e^{-\alpha_q u (b_qn)^{-1/m}}\hphi(\xi_q+u(b_q n)^{-1/m})]^n,$$
$$y_{q,n}=\frac{x-\alpha_qn}{(b_qn)^{1/m}},$$
and   write
\begin{eqnarray*}
\Phi_q &=& e^{-ix\xi_q}\hphi(\xi_q)^n\int_{-\epsilon}^\epsilon e^{-i \frac{(x-\alpha_qn)}{(b_q n)^{1/m}} [(b_q n)^{1/m}\theta]} [\hphi(\xi_q)^{-1}e^{-i\alpha_q\theta}\hphi(\xi_q+\theta)]^n d\theta\\
&=& (b_q n)^{-1/m} e^{-ix\xi_q}\hphi(\xi_q)^n\int_{-(b_qn)
^{1/m}\epsilon}^{(b_qn)
^{1/m}\epsilon} e^{-i y_{q,n}u} \psi_{q,n}(u) du.
\end{eqnarray*}
Next, since
$$2\pi H_{m,b'_q/b_q}(y_{q,n})=\int_{-\infty}^{\infty}e^{-y_{q,n}u}
e^{-(1+ib'_q/b_q)u^m}du.$$
we have
\begin{eqnarray*}\lefteqn{\Phi_q - 2\pi
(b_q n)^{-1/m} e^{-ix\xi_q} \hphi(\xi_q)^nH_{m,b'_q/b_q}(y_{q,n}))=}\vspace{.5in}&&\\
&&\int_{(b_qn)^{1/m}I_q}e^{-i y_{q,n}u} \psi_{q,n}(u) du -\int_{-\infty}^{\infty}e^{-iy_{q,n}u}
e^{-(1+ib'_q/b_q)u^m}du\\
&=&\int_{|u|\le \epsilon(b_qn)^{1/m}}e^{-i y_{q,n}u} [\psi_{q,n}(u)
-e^{-(1+ib'_q/b_q)u^m}]du\\&& \vspace{.5in}-\int_{|u|>\epsilon(b_q n)^{1/m}}e^{-iy_{q,n}u}
e^{-(1+ib'_q/b_q)u^m}du =\mathcal I_1-\mathcal I_2.
\end{eqnarray*}
The integral $\mathcal I_2$ can be estimated brutally by
$$|\mathcal I_2|\le 2\int_{u>\epsilon(b_q n)^{1/m}}
e^{-u^m}du= O(e^{-\epsilon^mb_q n}).$$
To estimate $\mathcal I_1$, note that (\ref{hypq}) shows that for any $\eta>0$ there exists $\epsilon>0$ such that
$$|\psi_{q,n}(u)-e^{-(1+ib'_q/b_q)u^m}|\le e^{-u^m}|e^{\eta u^m }-1|\le C\eta e^{-u^m/4}$$
for all $|u|\le \epsilon(b_q n)^{1/m}.$ It follows that
$$|\mathcal I_1|\le 2C\eta \int_0^\infty e^{-u^m/4}du.$$
Putting all these estimates together, we find that
$$ \phi^{(n)}(x)- (b_q n)^{-1/m} e^{-ix\xi_q} \hphi(\xi_q)^nH_{m,b'_q/b_q}(y_{q,n}))
= o(1/n^{1/m}).$$
\end{proof}

\subsection{Examples}\label{sec-exa}
 One of the simplest example that can be used to illustrate the results discussed in this paper appears in the following proposition.
\begin{pro}\label{prop-B}
Assume that $\phi$ is real symmetric, $|\hphi|\le 1$
and there exists $a>0$ such that
\begin{equation} \label{26}
\hphi(\theta)=1-a
\theta^{2k}(1+o(1)) \mbox{ at }
\theta=0. \end{equation}
Then  $\phi(x)\neq 0$ for some $x\ge k$.
If we assume further that $\phi$ is supported on $\{-k,\dots,k\}$ then
$a\in (0,2^{-2k+1}]$ and
\begin{eqnarray}
\phi &=& \delta_0 -\lambda (\delta_0-\beta)^k
= \delta_0 -(\delta_0-\beta_{\lambda^{1/k}})^k \label{phi=B}\\
&=&\sum_{j=1}^k (-1)^j\binom k j\beta^{(j)}_{\lambda^{1/k}} \nonumber
\end{eqnarray}
where $\lambda=a2^{k}\in (0,2^{-k+1}]$, $\beta=\frac{1}{2}(\delta_{-1}+\delta_1)$ and
 $\beta_s= (1-s)\delta_0+s\beta$, $s\in (0,1)$.
\end{pro}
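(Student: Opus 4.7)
The overall plan is to exploit that, since $\phi$ is real and symmetric with support in $\{-k,\dots,k\}$, its Fourier transform $\hphi(\theta)$ is a real polynomial of degree $\le k$ in $\cos\theta$; equivalently, setting $u=\sin^2(\theta/2)=(1-\cos\theta)/2$, one can write $\hphi(\theta)=Q(u)$ for a real polynomial $Q$ of degree $\le k$ with $Q(0)=\hphi(0)=1$. The hypothesis \eqref{26} will force $Q$ to take the very rigid form $Q(u)=1+c_ku^k$, after which the representation $\phi=\delta_0-\lambda(\delta_0-\beta)^k$ follows by inverting the Fourier transform.

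For the first claim I would translate \eqref{26} into moment information: the Taylor expansion $\hphi(\theta)=\sum_{j\ge 0}(-1)^jm_{2j}\theta^{2j}/(2j)!$, where $m_{2j}=\sum_x x^{2j}\phi(x)$, together with \eqref{26} yields $m_{2j}=0$ for $j=1,\dots,k-1$ and $m_{2k}=(-1)^{k+1}(2k)!\,a\ne 0$. Suppose for contradiction that $\phi(x)=0$ whenever $|x|\ge k$; by symmetry the $k-1$ vanishing conditions become $\sum_{x=1}^{k-1}x^{2j}\phi(x)=0$ for $j=1,\dots,k-1$. The coefficient matrix $(x^{2j})_{j,x}$ reduces, after pulling $x^2$ out of each column, to a Vandermonde matrix in the distinct nodes $1^2,2^2,\dots,(k-1)^2$, hence is nonsingular. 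This forces $\phi(1)=\dots=\phi(k-1)=0$, so $\phi=\phi(0)\delta_0$ and $m_{2k}=0$, contradicting $a>0$.

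Next I would assume $\phi$ is supported in $\{-k,\dots,k\}$ and write $\hphi(\theta)=Q(u)$, $u=\sin^2(\theta/2)$, $Q(u)=1+\sum_{j=1}^kc_ju^j$. Because $u=\theta^2/4+O(\theta^4)$, the leading contribution of $c_ju^j$ to the Taylor expansion of $\hphi$ at $\theta=0$ is $c_j\theta^{2j}/4^j$, while higher-order corrections from $u^j$ only enter at powers $\theta^{2j+2},\theta^{2j+4},\dots$. Matching the $\theta^2$-coefficient forces $c_1=0$, then matching $\theta^4$ forces $c_2=0$, and so on; inductively one obtains $c_1=\dots=c_{k-1}=0$ and finally $c_k/4^k=-a$, giving $c_k=-4^ka$. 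Thus $\hphi(\theta)=1-a\,4^k\sin^{2k}(\theta/2)=1-a\,2^k(1-\cos\theta)^k$.

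Finally, since the Fourier transform of $\delta_0-\beta$ is $1-\cos\theta$, Fourier inversion gives $\phi=\delta_0-\lambda(\delta_0-\beta)^k$ with $\lambda=a\,2^k$. The minimum value of $\hphi$ is $1-\lambda\,2^k$, attained at $\theta=\pi$, and the bound $|\hphi|\le 1$ reduces to $\lambda\,2^k\le 2$, giving $\lambda\in(0,2^{-k+1}]$ and $a\in(0,2^{-2k+1}]$. The identity $\delta_0-\beta_s=s(\delta_0-\beta)$ applied with $s=\lambda^{1/k}$ yields the second equivalent form, and the binomial expansion of $(\delta_0-\beta_{\lambda^{1/k}})^k$ produces the third. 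The main delicate step is the inductive matching in the third paragraph, but it is essentially a reindexing of the Vandermonde argument from the second.
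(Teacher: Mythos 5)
Your proof is correct and follows the same route as the paper: view $\hphi$ as a polynomial in $\cos\theta$ (equivalently in $u=(1-\cos\theta)/2$), observe that condition \eqref{26} forces $1-\hphi$ to vanish to order $k$ in $u$ at $u=0$, and deduce both the degree bound and the explicit form $\hphi=1-\lambda(1-\cos\theta)^k$ with $\lambda=a2^k$. The paper simply asserts the vanishing-order facts (``the polynomial $1-Q$ vanishes of order $k$ at $1$, hence $p\ge k$'' and, under the support hypothesis, $1-Q(u)=\lambda(1-u)^k$), whereas you unpack them — a moment/Vandermonde argument for the degree bound and Taylor-coefficient matching for the explicit form — but the content is identical. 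The remaining steps (Fourier inversion, the identity $\delta_0-\beta_s=s(\delta_0-\beta)$, the binomial expansion, and reading off $\lambda\le 2^{-k+1}$ from $|\hphi|\le 1$ via $\hphi(\pi)=1-\lambda2^k\ge -1$) match the paper's.
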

\begin{rem}The function $\phi$ defined at (\ref{phi=B}) satisfies
$$\hphi(\theta)=1-\lambda(1-\cos\theta)^k.$$
It follows that $\max\{|\hphi|\}=1$ as well as (\ref{26}) for any
$\lambda=a2^{k}\in (0,2^{-k+1}]$.The maximum $1=\max\{|\hphi|\}$ is attained
solely at $0$ if and only if $\lambda \in (0,2^{-k+1})$.
Note that any parameter $\lambda$ in the range $(0,1/2]$ is admissible
for all values of $k$.
\end{rem}
\begin{rem} If convolution by $\delta_0-\beta_s$ (for some fixed $s\in (0,1)$, say $s=1/2$) is interpreted as the discrete analog of the (positive) Laplacian $-\partial^2_x$ then convolution by $\delta_0-\phi= (\delta_0-\beta_s)^k$
is analogous to the higher even powers of the Laplacian, that is, $(-1)^k\partial^{2k}_x$.
\end{rem}
\begin{proof} By assumption, if $p$ is the largest integer such that $\phi(p)\neq 0$, $\hphi$ is a polynomial $Q$
in $\cos\theta$ of degree $p$. If \eqref{26} holds then the polynomial $1-Q$ vanishes of order $k$ at $1$. Hence, we must have $p\ge k$. If we assume that $\phi$ is supported on $\{-k,\dots,k\}$, then we must must have $1-Q(u)=\lambda(1-u)^k$ and $a=2^k\lambda$ with $a$ as in (\ref{26}). That is,
\begin{equation*}
\hphi(\theta)=1-\lambda(1-\cos\theta)^k=
1-\left(1-\left(1-\lambda^{1/k}\right)+\lambda^{1/k}\cos\theta\right)^k.
\end{equation*}
The condition that $|\hphi|<1$ on $(0,2\pi)$ translates into $1-\lambda 2^k>-1$, that is $\lambda<2^{-k+1}$.
Further, since $\cos\theta=\hat\beta(\theta)$ where $\beta(1)=\beta(-1)=1/2$ and $\beta=0$ otherwise (i.e., $\beta=$Bernoulli($1/2$)), we see that we must have
\begin{equation*}
\phi=-\sum_{i=1}^k(-1)^i\binom{k}{i}\beta_{\lambda^{1/k}}^{(i)}
\end{equation*}
where $\beta_s=(1-s)\delta_0+s\beta$.
\end{proof}

\begin{exa} In Proposition \ref{prop-B}, consider the case when $k=1$ and $\lambda=1$ so that $\phi= \beta$.
We have $\hphi(\theta)=\cos\theta$. In this classical case, Theorem \ref{th-extLL} yields
$$\phi^{(n)}(x) = (1+(-1)^ne^{-ix\pi})(n/2)^{-1/2}H_2(x/(n/2)^{1/2})+o(n^{-1/2}).$$
This captures the periodicity of the Bernoulli walk.
\end{exa}

\begin{exa} In Proposition \ref{prop-B}, consider the case when $k=2 $, that is
$\phi= 2\beta_s-\beta_s^{(2)}$ with $s\in (0,1/\sqrt{2})$.
We have $\hphi(\theta)=1-s^2(1-\cos\theta)^2$. Theorem \ref{oldthmB23} yields
$$\phi^{(n)}(x) \sim  (4/s^2n)^{1/4}H_4(x(4/(s^2n)^{1/4}).$$
The same result holds true in the limit case where $s=1/\sqrt{2}$ and $ \phi=2\beta_{1/\sqrt{2}}-\beta_{1/\sqrt{2}}^{(2)}$. However,
in this case,  $|\hphi|\leq 1$ and $\hphi(\theta)=1$ if and only if $\theta=0$ or $\theta=\pi$. At $0$, $\phi(\theta)=1-\frac18|\theta|^4+O(|\theta|^6)$. At $\pi$, $\hphi(\theta)=-1+(\pi-\theta)^2+O(|\pi-\theta|^4)$. To obtain the desired asymptotic, apply
Theorem \ref{th-extLL}. Compare to the previous example.
\label{exB33}
\end{exa}

\begin{exa} Let $\phi$ be defined by $\phi(0)=5/8$, $\phi(\pm 2)=-1/4$,
$\phi(\pm 4)=-1/16$ and $\phi(x)=0$ otherwise. We have
\begin{eqnarray*}
\hphi(\theta)&= &\frac{5}{8}- \frac{1}{2} \cos2\theta -\frac{1}{8}\cos 4 \theta\\
&=& \frac{5}{4} -\cos^2\theta -\frac{1}{4}\cos^2 2\theta \\
&=& 1   -\cos^4\theta.
\end{eqnarray*}
Hence $|\hphi|\le 1$ and $\hphi(\theta)=1$ if and only if $\theta=\pm \pi/2$.
Further, at $\theta_\pm=\pm \pi/2$, $$\hphi(\theta)= 1-|\theta-\theta_\pm|^4+O(|\theta-\theta_\pm|^5).$$
Theorem \ref{th-extLL} applies and gives
\begin{eqnarray*}
\phi^{(n)}(x)&=& (e^{-ix\frac{\pi}{2}}+e^{ix\frac{\pi}{2}})n^{-1/4}H_4(xn^{-1/4})+o(n^{-1/4})\\
&=&2\cos (\pi x/2)n^{-1/4}H_4(xn^{-1/4})+o(n^{-1/4}).\end{eqnarray*}
\end{exa}

\begin{exa} \label{3points} In this example, we consider the convolution powers of an arbitrary real valued function supported on $\{-1,0,+1\}$
(except for some trivial cases). For $a_0,a_+,a_-\in \mathbb R$,
let $\phi $ be given by
$$\phi(0)=a_0, \;\;\phi(\pm 1)=a_\pm \mbox{ and }\phi=0 \mbox{ otherwise}.$$
We assume $a_0>0$ and that either $a_+\neq 0$ or $a_-\neq 0$
to avoid trivialities. We do not assume any normalization. In particular,
$$\sum \phi=a_0+a_++a_-$$
is an arbitrary real number. The following proposition shows that there are
essentially 3 different ``generic'' cases (each occurring on an open
subset of the parameter space). In only one of these 3 cases is the
normalization $\sum \phi=1$ the correct normalization giving
$\max \{|\hphi|\}= 1$. In the other two cases, different normalizations
are needed to insure $\max \{|\hphi|\}= 1$. Because we do not incorporate
any normalization, the asymptotic described below for $\phi^{(n)}$
contain an exponential term $A^n$. In each case, the constant $A$ satisfies
$A=|\max\{|\hphi|\}$ and is given explicitly.

\begin{pro} Referring to the function $\phi$ defined above, the following asymptotics hold true:
\begin{itemize}
\item Assume that $a_+a_-\ge 0$ or that $a_+a_-<0$ and $4|a_+a_-|< a_0|a_++a_-|$.
Set $$A=a_0+|a_+|+|a_-|, \;\;\alpha= \frac{a_+-a_-}{A} \;\mbox{ and }\;
\gamma=\frac{|a_+|+|a_-|}{2A} -\frac{\alpha^2}{2}.$$ Then
$$\phi^{(n)}(x)=\left(\frac{a_++a_-}{|a_++a_-|}\right)^x A^n (\gamma n)^{-1/2}
e^{-|x-\alpha n|^2/\gamma n} +o(A^nn^{-1/2})$$
where the error term is uniform in $x$.
\item Assume that $a_+a_-<0$ and $4|a_+a_-|> a_0|a_++a_-|$. Set
$$A=|a_+-a_-|(1+a_0^2/4|a_+a_-|)^{1/2}, \;\;\alpha=\frac{a_++a_-}{a_+-a_-}.$$
Let $\theta_0\in (0,\pi)$ be defined by $\cos\theta_0=-a_0(a_++a_-)/4a_+a_-$
and set
$$
b=
\frac{4a_0|a_+a_-|\sin\theta_0}{(a_+-a_-)(4|a_+a_-|+a_0^2)},\;\;
\gamma
= \frac{16|a_+a_-|^2-a_0^2(a_++a_-)^2}{2(a_0^2+4|a_+a_-|)(a_+-a_-)^2}.$$
Let $\omega_0$ be the argument of $\hphi(\theta_0)$. Then
\begin{eqnarray*}
\phi^{(n)}(x)
&=& (n/\gamma)^{-1/2}A^n
e^{-ix\theta_0+in\omega_0}H_{2,b/\gamma}((x-\alpha n)/(\gamma n)^{1/2})\\
&&+ (n/\gamma)^{-1/2} A^n
e^{ix\theta_0-in\omega_0}H_{2,-b/\gamma}((x-\alpha n)/(\gamma n)^{1/2})
+o(A^n n^{-1/2})
\end{eqnarray*}
where the error term is uniform in $x$.
\end{itemize}
\label{pro-exa}
\end{pro}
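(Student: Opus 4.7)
I would prove the proposition by locating all maxima of $|\hat\phi|$ and then applying the extended local limit theorem (Theorem \ref{th-extLL}) to $\psi := \phi/A$ with $A = \max_\theta |\hat\phi(\theta)|$, multiplying the resulting asymptotic for $\psi^{(n)}$ by $A^n$.

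The Fourier transform is
$$
\hat\phi(\theta) = a_0 + (a_+ + a_-)\cos\theta + i(a_+ - a_-)\sin\theta,
$$
so writing $u = \cos\theta$,
$$
|\hat\phi(\theta)|^2 = a_0^2 + (a_+ - a_-)^2 + 2 a_0(a_+ + a_-)u + 4 a_+ a_- u^2
$$
is a quadratic in $u \in [-1,1]$ with leading coefficient $4 a_+ a_-$. When $a_+a_->0$ this quadratic is convex, forcing the max of $|\hat\phi|$ to a boundary point $u = \pm 1$; when $a_+a_-<0$ it is concave with vertex $u_* = -a_0(a_+ + a_-)/(4 a_+ a_-)$, giving an interior max iff $|u_*| \le 1$, i.e., $4|a_+ a_-| \ge a_0 |a_+ + a_-|$. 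This produces the two cases of the proposition; since $\phi$ is real, $\hat\phi(-\theta) = \overline{\hat\phi(\theta)}$, so any interior maximum at $\theta_0 \in (0,\pi)$ is paired with one at $-\theta_0$.

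In case 1 the single maximum $\theta_0 \in \{0, \pi\}$ has $\hat\phi(\theta_0)$ real, so in $\log\hat\phi(\theta_0 + \xi) = \log\hat\phi(\theta_0) + i\alpha\xi - \beta \xi^2 + O(\xi^3)$ the coefficient $\beta$ is real. Computing $i\alpha = \hat\phi'(\theta_0)/\hat\phi(\theta_0)$ and $\beta = -\tfrac12\hat\phi''(\theta_0)/\hat\phi(\theta_0) - \tfrac12 \alpha^2$ yields $\alpha = (a_+ - a_-)/A$ and $\beta = \gamma$ as stated. Applying Theorem \ref{th-extLL} with $m = 2$ and a single max point produces $H_{2,0}$, the real Gaussian, giving the first formula; the prefactor $((a_++a_-)/|a_++a_-|)^x$ simply encodes $e^{-ix\theta_0}$ in the two sub-cases $\theta_0 = 0,\pi$.

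In case 2 one verifies, by substituting $u = u_*$ into $|\hat\phi|^2$ and using $(a_++a_-)^2 - (a_+-a_-)^2 = 4a_+a_-$, that $|\hat\phi(\theta_0)|^2 = (a_+ - a_-)^2(1 + a_0^2/(4|a_+a_-|)) = A^2$. Writing $\hat\phi(\theta_0) = A e^{i\omega_0}$, I expand $\log\hat\phi$ at $\theta_0$ using $p = a_++a_-$, $q = a_+-a_-$, $c = u_*$, $s = \sin\theta_0 > 0$; the key identity $pca_0 + p^2c^2 + q^2 s^2 = q^2$ (which follows at $c = u_*$ after using $p^2 - q^2 = 4a_+a_-$) collapses the algebra to give the stated $\alpha = p/q$ together with $\gamma = \mathrm{Re}(\beta)$ and $b = \mathrm{Im}(\beta)$. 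The positivity $\gamma > 0$ required by Theorem \ref{th-extLL} is equivalent to $16|a_+a_-|^2 > a_0^2(a_++a_-)^2$, which is precisely the case 2 hypothesis. Conjugate symmetry yields at $-\theta_0$ the same drift $\alpha$, the conjugate coefficient $\bar\beta = \gamma - ib$, and phase $-\omega_0$, so Theorem \ref{th-extLL} with $Q_1 = 2$ sums the two complex heat kernel terms $H_{2, \pm b/\gamma}$ to give the claimed asymptotic. The only delicate step is the identity $pca_0 + p^2c^2 + q^2 s^2 = q^2$ in case 2; once this collapse is carried out, the rest reduces to bookkeeping in Theorem \ref{th-extLL}.
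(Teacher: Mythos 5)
Your proposal follows the same route as the paper's own proof: locate the points where $|\hat\phi|$ attains its maximum (boundary of $u=\cos\theta\in[-1,1]$ versus interior, according to the sign of $a_+a_-$ and the size of $4|a_+a_-|$ relative to $a_0|a_++a_-|$), expand $\log\hat\phi$ to second order there, and invoke Theorem~\ref{th-extLL}; your bookkeeping (the quadratic in $u$, the identity $pca_0+p^2c^2+q^2s^2=q^2$) is merely a cleaner organization of the same Taylor computations the paper carries out via $\mathrm{Re}$ and $\mathrm{Im}$ of $\overline{\hat\phi(\theta_0)}\hat\phi(\theta)$. One point you inherit from the paper without flagging: in the sub-case $a_+a_-<0$ with $4|a_+a_-|<a_0|a_++a_-|$, the maximum of $|\hat\phi|$ equals $a_0+|a_++a_-|$, which is strictly smaller than the $A=a_0+|a_+|+|a_-|$ appearing in the proposition; your assertion that the expansion ``yields $\alpha=(a_+-a_-)/A$ and $\beta=\gamma$ as stated'' is thus accurate only when $a_+a_-\ge 0$, and the formulas for the other sub-case should use $a_0+|a_++a_-|$ in place of $A$ (the paper's proof makes the identical elision with ``the asymptotic is the same as described above'').
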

\begin{rem} The two principal terms on the right-hand side of the last equation
are complex conjugate so that their  sum is real (as it should be since $\phi$
is real valued).
\end{rem}
\begin{rem}This example can also be used to illustrate the stability theorem,
Theorem \ref{oldthmA11}. Indeed, all cases
with $a_0>0$ and either $a_+$ or $a_-$ non-zero are considered in Proposition
\ref{pro-exa} except for the very special case
when $4|a_+a_-|=a_0|a_++a_-|$. Let $A=A(a_0,a_+,a_-)$ be as defined
in Proposition \ref{pro-exa}. As a corollary of the proof given below,
it follows that the normalized function $\phi_0=\phi/A$ satisfies  the
stability condition (\ref{thomee}) in all cases but the special case
$4|a_+a_-|=a_0|a_++a_-|$ for which it actually fails.
\end{rem}
\begin{proof}
Obviously
$$\hphi(\theta)= a_0+ (a_++a_-)\cos \theta+ i(a_+-a_-)\sin \theta$$
and
$$|\hphi(\theta)|^2= a_0^2+a_+^2+a_-^2 +2a_+a_-(2\cos^2 \theta -1)+ 2a_0(a_++a_-)\cos\theta.$$
Further
$$\hphi(0)=a_0+a_++a_-,\;\ \hphi(\pi)= a_0 -(a_++a_-).$$
If $a_+a_-\ge 0$ then $|\hphi|$ has a maximum which is attained only at
$0$ if $a_++a_-> 0$ and only at $\pi$ if $a_++a_-<0$.

Assume first that $a_+a_-\ge 0$ and set
$$A=a_0+|a_+|+|a_-|, \;\;\alpha=\frac{a_+-a_-}{A} \mbox{ and }\gamma=
\frac{|a_+|+|a_-|}{2A}-\frac{(a_+-a_-)^2}{2A^2}>0.$$
Considering separately the two cases  $a_++a_->0$ and $a_++a_-<0$, we obtain
\begin{eqnarray*}
e^{-i\alpha\theta}\hphi(0)^{-1}\hphi(\theta)&=& (1- i\alpha \theta -\frac{\alpha^2}{2}\theta^2
+o(|\theta|^2))(1 +i\alpha \theta- (\gamma+\alpha^2/2)\theta^2 +o(|\theta|^2))\\
&=& 1-\gamma\theta^2(1+o(1))=e^{-\gamma \theta^2(1+o(1))}.
\end{eqnarray*}
Hence, if $a_++a_->0$ we have
$$\phi^{(n)}(x)= A^n (\gamma n )^{-1/2}H_2((x-\alpha n)/(\gamma n)^{1/2})+o(A^{n}n^{-1/2}).$$
If instead $a_++a_-<0$ then we have
$$\phi^{(n)}(x)= (-1)^x A^n (\gamma n )^{-1/2}H_2((x-\alpha n)/(\gamma n)^{1/2})+o(A^{n}n^{-1/2}).$$

Next we consider what happens when $a_+a_-<0$. Computing the derivative
of $f(\theta)=|\hphi(\theta)|^2$ gives
$$f'(\theta)=-2( 4a_+a_- \cos \theta + a_0(a_++a_-))\sin \theta.$$

If $4|a_+a_-|< a_0|a_++a_-|$, then $|\hphi|^2$ attains its maxima at $0$
if $a_++a_->0$ and at $\pi$ if $a_++a_-<0$. In each case, the asymptotic is the same as described above.

If $4|a_+a_-|\ge a_0|a_++a_-|$, then we set $\theta_0 $ to be the point in $(0,\pi)$ such that
$$\cos \theta_0= -\frac{a_0(a_++a_-)}{4a_+a_-}$$ and note that
$|\hphi|^2$ has twin maxima at $\theta=\pm \theta_0$ where
\begin{eqnarray*}
A^2&=&|\hphi(\pm\theta_0)|^2= a_0^2+a_+^2+a_-^2 +2|a_+a_-| +a_0^2\frac{(a_++a_-)^2}{4|a_+a_-|}\\
&=&(a_+-a_-)^2\left(1+\frac{a_0^2}{4|a_+a_-|}\right) .\end{eqnarray*}
Further
\begin{eqnarray*}
\mbox{Re}(\overline{\hphi(\theta_0)}\hphi(\theta))
&=& (a_0+ (a_++a_-)\cos \theta_0)(a_0+ (a_++a_-)\cos \theta)
+ (a_+-a_-)^2\sin \theta_0\sin \theta \\&=&
(a_+-a_-)^2\left(\frac{a^2_0}{4|a_+a_-|}+ \cos\theta_0 \cos\theta + \sin \theta_0\sin \theta\right)
\\&=&
(a_+-a_-)^2\left(\frac{a^2_0}{4|a_+a_-|}+ \cos (\theta_0-\theta)\right)\\
&=&
|\hphi(\theta_0)|^2 \left(1- \frac{2|a_+a_-|}{a_0^2+4|a_+a_-|}(\theta-\theta_0)^2(1+o(1))\right)
\end{eqnarray*}
and
\begin{eqnarray*}
\mbox{Im}(\overline{\hphi(\theta_0)}\hphi(\theta))
&=&  (a_+-a_-)( (a_0+(a_++a_-)\cos\theta_0)\sin\theta -(a_0+(a_++a_-)\cos\theta) \sin \theta_0 )\\
&=&
a_0(a_+-a_-)(\sin \theta -\sin \theta_0)+ (a^2_+-a^2_-)\sin (\theta-\theta_0)\\
&=& (a_+-a_-)(a_0 \cos \theta_0 +a_++a_- )(\theta-\theta_0)\\
&& -
\frac{a_0(a_+-a_-)}{2}\sin\theta_0 (\theta -\theta_0)^2 (1+o(1))\\
&=& (a^2_+-a^2_-)\left(1+\frac{a_0^2}{4|a_+a_-|} \right)(\theta-\theta_0)\\
&& -
\frac{a_0(a_+-a_-)}{2}\sin\theta_0 (\theta -\theta_0)^2 (1+o(1))\\
&=&|\hphi(\theta_0)|^2
\frac{a_++a_-}{a_+-a_-}(\theta-\theta_0) -\\
&& |\hphi(\theta_0)|^2
\frac{4a_0|a_+a_-|\sin\theta_0}{(a_+-a_-)(4|a_+a_-|+a_0^2)} (\theta -\theta_0)^2 (1+o(1))
\end{eqnarray*}

Set
$$\alpha = \frac{a_++a_-}{a_+-a_-}
,\;\;b=
\frac{4a_0|a_+a_-|\sin\theta_0}{(a_+-a_-)(4|a_+a_-|+a_0^2)}$$
and
\begin{eqnarray*}
\gamma&=& \frac{2|a_+a_-|}{a_0^2+4|a_+a_-|}
 -\frac{1}{2}\left(\frac{a_++a_-}{a_+-a_-}\right)^2\\
&=& \frac{16|a_+a_-|^2-a_0^2(a_++a_-)^2}{2(a_0^2+4|a_+a_-|)(a_+-a_-)^2}.
\end{eqnarray*}
Note that $\gamma$ is (strictly) positive
if $4|a_+a_-|>a_0|a_++a_-|.$
With this notation, assuming that
$4|a_+a_-|>a_0|a_++a_-|$, we have
$$\hphi(\theta_0+\theta)=\phi(\theta_0)e^{i\alpha \theta -(\gamma+i\beta)\theta^2(1+o(1))}.$$
Similarly,
$$\hphi(-\theta_0+\theta)=\phi(-\theta_0)e^{i\alpha \theta -(\gamma-i\beta)\theta^2(1+o(1))}.$$
Let $\omega_0$ be the argument of $\hphi(\theta_0)$. Then
Theorem \ref{th-extLL} gives
\begin{eqnarray*}
\phi^{(n)}(x)
&=& (n/\gamma)^{-1/2}A^n
e^{-ix\theta_0+in\omega_0}H_{2,b/\gamma}((x-\alpha n)/(\gamma n)^{1/2})\\
&&+ (n/\gamma)^{-1/2} A^n
e^{ix\theta_0-in\omega_0}H_{2,-b/\gamma}((x-\alpha n)/(\gamma n)^{1/2})
+o(^n n^{-1/2}).
\end{eqnarray*}

Finally, if $4a_+a_-=-a_0(a_++a_-)$ (resp. $4a_+a_-=a_0(a_++a_-)$), $|\hphi|$
is maximum at $0$ (resp. at $\pi$). The two case are similar and we treat
only the case when $4a_+a_-=-a_0(a_++a_-)$. In this case  we have
$$\hphi(0)^{-1}\hphi(\theta)=e^{i\alpha\theta-i\frac{1}{6}(\alpha-\alpha^3)\theta^3
-\frac{1}{8}(\alpha^2-\alpha^4)\theta^4(1+o(1))}.$$
\end{proof}
By (\ref{Thomest}), in this case we have
$\sum_x|\phi^{(n)}(x)|\simeq A^nn^{1-3/4}.$
\end{exa}

\section{Bounds on convolution powers of normalized complex functions on $\bbz$}\label{sec4}

The goal of this section is to give good upper bounds for the convolution powers
$\phin $ of a given complex valued function that is finitely supported on $\mathbb Z$.
Let $\hphi$
be the Fourier transform of $\phi$ so that
$$
\phin  (x)= \frac{1}{2\pi}\int_I e^{-i x\theta}
[\hat \phi(\theta)]^nd\theta.
$$
Obviously the function $\phi$ can be normalized in some appropriate way and it is very reasonable to chose the normalization
$$
\max_{\theta\in I}\{|\hphi|\}=1.
$$
Note that this is the same as saying that the operator norm
of the convolution operator by $\phi$ acting on $\ell^2(\mathbb Z)$ is 1.

As we assume that $\phi$ is finitely supported,
either $|\hphi|=1$ on $I$, that is, $\hphi(\theta) = c e^{i j \theta}$
with $|c|=1$, $j$ an integer, or  $\max\{|\hphi|\}$
is attained at at most finitely many points in $I$ (the zeros
of the trigonometric polynomial $|\hphi|^2-1$).

In the first subsection of this section, we concentrate on the case when $\max|\hat \phi|$ is attained
at only one point $\theta_0\in I$ and $\phi(\theta)-\phi(\theta_0)$ vanishes
up to some even order.

The case when the maximum of $\hphi$ is attained at more than one point will be considered in the second subsection below but our results are much less precise in that case.

\subsection{Generalized Gaussian bounds, I}
In this section, we assume that $\max|\hat \phi|$ is attained
at only one point $\theta_0\in I$.
By replacing $\phi$ by $x\mapsto \phi_{\theta_0}(x)= \hphi(\theta_0)^{-1} e^{i x\theta_0}\phi(x)$, it is enough to consider the case when
$\theta_0=0$ and $\hat \phi(0)=1$. So assume from
now on that $\phi$ is finitely supported  with
$$
\{x: \phi(x)\neq 0\}\subset [-K,K],\;\;\|\phi\|_1=\sum|\phi|<\infty
$$
and
\begin{equation}\label{phicond}
\hphi(0)=1,\;\;|\hphi(\theta)|<1 \mbox{ for } \theta\in I^*=I\setminus \{0\}.
\end{equation}
Since  $\phi$ is finitely supported,
$\hat \phi$ is actually an entire function of the complex variable $z=u+iv\in \mathbb C$, namely,
$$
\hphi(z)=\sum_{x\in \mathbb Z} \phi(x)e^{iz x}.$$
As noticed in \cite{Thomee1,Thomee2} (and many other places),
it is most efficient to  expand
$\log \phi$ and write
$$
\hphi(\theta)= \exp( \sum_{j=1}^\infty c_j \theta ^j) \mbox{ near } 0
$$
to study the convolution powers $\phin $.
Indeed, the conditions $\hphi(0)=1$ and $|\hphi|< 1$ in $I^*$
implies that only two cases may arise. Namely, either
there exist a real $\alpha$, two  integers $0<\mu<\nu$, $\nu$ even,
a real polynomial $q$ with $q(0)\neq 0$,
and a complex number $\gamma$ with $\mbox{Re}(\gamma)>0$ such that
\begin{equation}\label{bad}
\hphi(\theta)=
e^{i\alpha \theta +i \theta^\mu q(\theta) -\gamma \theta^\nu(1+o(1))}
\end{equation}
or there exists a real $\alpha$, an even integer $\nu$
and a complex number $\gamma$ with $\mbox{Re}(\gamma)>0$ such that
\begin{equation}\label{good}
\hphi(\theta)= e^{i\alpha \theta -\gamma \theta^\nu(1+o(1))}.
\end{equation}
When \eqref{bad} holds, \cite{Thomee1} shows that
$\|\phin \|_1$ tends to infinity with  $n$ and in fact
$$
\|\phin \|_1\simeq n^{(1-\mu/\nu)/2} \mbox{ as } n\rightarrow \infty.
$$
See \cite[Sect.~7]{Thomee2} and the references therein. Hence we focus on the case when
\eqref{good} holds.
The simplest form of our main result is stated in the following theorem.
\begin{theo}
Let $\phi:\mathbb Z\rightarrow \mathbb C$
be a finitely supported complex valued function such that \eqref{good} holds true and
$|\hphi(\theta) |<1$ on $I^*$.
Then there are constants $C,c\in (0,\infty)$ such that for any $x \in\mathbb Z$ and $n\in \mathbb N^*$, we have
$$
|\phin(x) |\le \frac{C}{n^{1/\nu}}\exp\left( -c
\left(\frac{|x-\alpha n|}{n^{1/\nu}}\right) ^{\frac{\nu}{\nu-1}}\right).
$$
\label{Gauss1}
\end{theo}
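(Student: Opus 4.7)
The plan is to use a complex‐analytic shift of the Fourier contour, which is the Chernoff/Cramér idea executed in the Fourier setting. Since $\phi$ is finitely supported, $\hat\phi$ extends to an entire function that is $2\pi$-periodic in the real direction, so for any real $\eta$ Cauchy's theorem and periodicity give
\begin{equation*}
\phi^{(n)}(x)=\frac{e^{x\eta}}{2\pi}\int_{-\pi}^{\pi}e^{-ix\theta}\hat\phi(\theta+i\eta)^n\,d\theta, \qquad \bigl|\phi^{(n)}(x)\bigr|\le \frac{e^{x\eta}}{2\pi}\int_{-\pi}^{\pi}\bigl|\hat\phi(\theta+i\eta)\bigr|^n d\theta.
\end{equation*}
Writing $y=x-\alpha n$, I will choose $\eta$ of sign opposite to $y$ with $|\eta|\asymp (|y|/n)^{1/(\nu-1)}$, truncated at a small absolute constant $\eta_1$.

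The core analytic step is a pointwise bound on $|\hat\phi(\theta+i\eta)|$. Split $[-\pi,\pi]$ into a small neighborhood $|\theta|\le\theta_1$ and its complement. On the complement, hypothesis \eqref{phicond} together with compactness gives $|\hat\phi(\theta)|\le 1-\delta$ for some $\delta>0$, so by uniform continuity on a narrow complex strip, $|\hat\phi(\theta+i\eta)|\le 1-\delta/2$ as long as $|\eta|\le \eta_1$; this contributes only a term of order $e^{-cn}$ after the $e^{x\eta}$ factor is dealt with. On the neighborhood, I use that \eqref{good} together with analyticity of $\log\hat\phi$ at $0$ forces the expansion $\log\hat\phi(z)=i\alpha z-\gamma z^{\nu}+O(|z|^{\nu+1})$ for $|z|$ small. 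Taking real parts, expanding $\gamma(\theta+i\eta)^\nu$ as a sum of monomials $\theta^{\nu-k}\eta^k$ ($0\le k\le \nu$), and bounding each mixed term by Young's inequality $|\theta|^{\nu-k}|\eta|^k\le \varepsilon|\theta|^\nu+C(\varepsilon,k)|\eta|^\nu$ (with $\varepsilon$ small), the main negative contribution $-a\theta^\nu$ (where $a=\mathrm{Re}(\gamma)>0$) is preserved up to a factor; the $O(|z|^{\nu+1})$ remainder is absorbed by shrinking $\theta_1,\eta_1$. This yields the key estimate
\begin{equation*}
\log\bigl|\hat\phi(\theta+i\eta)\bigr|\;\le\;-\alpha\eta-c\,\theta^\nu+C\,\eta^\nu,\qquad |\theta|\le\theta_1,\;|\eta|\le\eta_1,
\end{equation*}
for some $c,C>0$.

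Integrating over $|\theta|\le\theta_1$ and evaluating the Gaussian-like integral $\int_{\mathbb R}e^{-cn\theta^\nu}d\theta=C'n^{-1/\nu}$ yields, after combining with $e^{x\eta}=e^{\alpha n\eta}e^{y\eta}$,
\begin{equation*}
\bigl|\phi^{(n)}(x)\bigr|\;\le\;C'n^{-1/\nu}\,\exp\bigl(y\eta+Cn\eta^\nu\bigr)+O(e^{y\eta}(1-\delta/2)^n).
\end{equation*}
Now optimize $\eta\mapsto y\eta+Cn\eta^\nu$: since $\nu$ is even the optimizer $\eta_*=-\mathrm{sgn}(y)(|y|/(\nu Cn))^{1/(\nu-1)}$ achieves value $-c_0|y|^{\nu/(\nu-1)}n^{-1/(\nu-1)}=-c_0(|y|/n^{1/\nu})^{\nu/(\nu-1)}$. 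This choice is admissible whenever $|\eta_*|\le\eta_1$, i.e., $|y|\le c_2 n$, and gives the advertised bound in that regime. For $|y|>c_2 n$, I take $\eta=-\mathrm{sgn}(y)\eta_1$ instead, obtaining a bound of the form $n^{-1/\nu}e^{-\eta_1|y|/2}$; since $\phi^{(n)}$ is supported in $[-Kn,Kn]$ we have $|y|=O(n)$, and in this range the linear decay $e^{-\eta_1|y|/2}$ dominates the target $e^{-c(|y|/n^{1/\nu})^{\nu/(\nu-1)}}$ up to constants (choosing $c$ small enough).

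The main obstacle is the key pointwise estimate in the local region. One has to handle cross terms $\theta^{\nu-k}\eta^k$ coming from both $\mathrm{Re}(\gamma)\,\mathrm{Re}((\theta+i\eta)^\nu)$ and $\mathrm{Im}(\gamma)\,\mathrm{Im}((\theta+i\eta)^\nu)$, plus the remainder $O(|z|^{\nu+1})$, without spoiling the good coefficients $-\alpha\eta$ and $-c\theta^\nu$. Everything else — the global bound off the origin, the contour shift, and the optimization in $\eta$ — is routine once this local inequality is in place.
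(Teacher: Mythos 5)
Your argument is correct and reaches the same conclusion, but by a genuinely different route than the paper. You tilt the contour: shift $\theta\mapsto\theta+i\eta$ (legitimate because $x\in\mathbb Z$ makes $e^{-ixz}$ genuinely $2\pi$-periodic in $\mathrm{Re}\,z$), bound $|\hat\phi(\theta+i\eta)|$ pointwise near $0$ via the complex expansion $\log\hat\phi(z)=i\alpha z-\gamma z^\nu+O(|z|^{\nu+1})$ and Young's inequality, integrate, and then optimize over the real shift parameter $\eta$. The paper instead keeps the contour fixed, converts the weight $\bigl(|x-\alpha n|/n^{1/\nu}\bigr)^q|\phin(x)|$ into an integral of $\partial_\theta^q P_n$ by $q$-fold integration by parts (this needs a small twist via the operator $\delta$ to avoid boundary terms, since $e^{-i(x-\alpha n)\theta/n^{1/\nu}}$ is not literally $2\pi n^{1/\nu}$-periodic when $\alpha n\notin\mathbb Z$), bounds $\partial_\theta^q P_n(\theta)$ by Cauchy's formula with a $q$- and $\theta$-dependent radius, and finally optimizes over the integer $q$. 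Both methods rest on the same two analytic inputs — the entire extension of $\hat\phi$ with the local expansion at $0$, and the strict bound $|\hat\phi|<1$ away from $0$ — and both end with a one-parameter optimization ($\eta$ real vs.\ $q$ integer), so they are morally the same Laplace-type computation. Your contour shift avoids the boundary-term subtlety entirely, which is a genuine simplification. The paper's derivative-estimate route, however, pays for that extra work by yielding, almost for free, the higher-order difference (``regularity'') bounds of Theorem~\ref{Gauss2}, which would require additional effort in the tilting framework. One point worth making explicit in a final write-up: the passage from the real-axis hypothesis \eqref{good} to the complex expansion $\log\hat\phi(z)=i\alpha z-\gamma z^\nu+O(|z|^{\nu+1})$ uses that $\hat\phi$ is entire with $\hat\phi(0)=1$, so $\log\hat\phi$ has a convergent Taylor series at $0$ whose coefficients $c_1,\dots,c_\nu$ are pinned down by \eqref{good} as $c_1=i\alpha$, $c_j=0$ for $1<j<\nu$, $c_\nu=-\gamma$; you invoke this implicitly and it is correct, but it deserves a sentence.
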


\begin{proof}
Write
$$
\phin(x)= \frac{1}{2\pi}
\int_I e^{-i x\theta} \hphi(\theta)^n d\theta
= \frac{1}{2\pi}\int_I e^{-i (x -\alpha n) \theta} P(\theta) ^n d\theta
$$
where
$$
P(\theta)= e^{-i \alpha \theta} \hat \phi(\theta).
$$
Condition (\ref{good}) together with $|\hphi|<1$ on $I^*$ implies that
\begin{equation}\label{Pgood}
P(\theta)=\exp(- \gamma \theta^\nu(1+o(1))),\;\;|P|<1 \mbox{ on } I^*
\end{equation}
with $\gamma, \nu$ as in \eqref{good}, that is $\mbox{Re}(\gamma)>0$ and
$\nu$ is an even integer.  In addition $P$ is obviously
an entire function of the complex variable $z=u+iv$ and \eqref{Pgood}
implies that there are constants $\gamma_1,\gamma_2\in (0,\infty)$ such that
\begin{equation}\label{Pgood1}
|P(z)|\le \exp(-\gamma_1 u^\nu +\gamma_2 v^\nu) \mbox{ on }
\{z=u+iv: |u|\le 3\pi/2\}
\end{equation}
and
\begin{equation}\label{Pgood2}
|P(z)|\le \exp(\gamma_2 v^\nu)  \mbox{ on } \mathbb C .
\end{equation}

To see why (\ref{Pgood2}) holds, at the origin, use the assumed expansion
$P(u)=e^{i\alpha u-\gamma u^\nu(1+o(1))}$.
Away from the origin, note that
$\hphi(z)$ is a trigonometric polynomial and thus is periodic in
$u$ with growth at most exponential in $|v|$ for large $v$.
The estimate
(\ref{Pgood1}) improves significantly upon (\ref{Pgood2}) only in the sector
$$\{z=u+iv: |v|\le \frac{\gamma_2}{\gamma_1}|u|; |u|\le 3\pi/2\}$$
where the the two parts of the hypothesis \eqref{Pgood} together
give the desired result.

Next, write
$$
\phin(x)= \frac{1}{2\pi n^{1/\nu}}\int_{n^{1/\nu}I}
 e^{-i \frac{(x -\alpha n)}{n^{1/\nu}} \theta} P_n(\theta)\ d\theta
$$
with
$$ n^{1/\nu}I= (-n^{1/\nu}\pi,n^{1/\nu}\pi] \mbox{ and }
P_n(\theta)=P(\theta/n^{1/\nu})^n.
$$
It follows that
\begin{equation}\label{trouble}
\left(
\frac{|x-\alpha n|}{n^{1/\nu}}\right)^q |\phin(x)|=
\frac{1}{2\pi n^{1/\nu}}\left|\int_{n^{1/\nu}I}
 e^{-i \frac{(x -\alpha n)}{n^{1/\nu}} \theta} \partial_\theta^q
P_n(\theta)\ d\theta\right|.
\end{equation}
This step requires some explanation because when $\alpha n$ is not an integer,
neither $e^{-i \frac{(x -\alpha n)}{n^{1/\nu}} \theta}$ nor $P_n(\theta)$ are periodic function of period $2\pi n^{1/\nu}$ so that, a priori, the repeated
integration by parts used to obtain (\ref{trouble}) should produce boundary terms. However, introducing the operator $\delta$ defined on smooth functions by
$$\delta f (\theta)=  e^{-i \frac{\alpha n }{n^{1/\nu}} \theta}\partial_\theta [e^{i \frac{\alpha n }{n^{1/\nu}} \theta} f(\theta)],$$  write
\begin{eqnarray*}
\int_{n^{1/\nu}I}
[\partial_\theta^q e^{-i \frac{(x -\alpha n)}{n^{1/\nu}} \theta} ]
P_n(\theta)\ d\theta &=&
\int_{n^{1/\nu}I} e^{-i \frac{\alpha n }{n^{1/\nu}} \theta}\partial_\theta^q [e^{i \frac{\alpha n }{n^{1/\nu}} \theta}
e^{-i \frac{x}{n^{1/\nu}} \theta}]
(\hphi(\theta/n^{1/\nu}))^n \ d\theta\\
&=&   \int_{n^{1/\nu}I} \delta^r
[e^{-i \frac{x}{n^{1/\nu}} \theta}]
(\hphi(\theta/n^{1/\nu}))^n \ d\theta.
\end{eqnarray*}
Next, observe that $\delta$ preserves  the periodicity of the function $f$. In particular, if $f$ is periodic of period $2\pi n^{1/\nu}$ then $\delta f$ has the same property. Hence the formal adjoint of $\delta$ on $2\pi n^{1/\nu}$-periodic functions is given by
$$\delta^* f (\theta)= - e^{i \frac{\alpha n }{n^{1/\nu}} \theta}\partial_\theta [e^{-i \frac{\alpha n }{n^{1/\nu}} \theta} f(\theta)].$$
It follows that
\begin{eqnarray*}
 \int_{n^{1/\nu}I} \delta^r
[e^{-i \frac{x}{n^{1/\nu}} \theta}]
(\hphi(\theta/n^{1/\nu}))^n \ d\theta &= &(-1)^r\int_{n^{1/\nu}I}
e^{-i \frac{x}{n^{1/\nu}} \theta} \delta^{*r}
(\hphi(\theta/n^{1/\nu}))^n \ d\theta \\
&=&(-1)^r\int_{n^{1/\nu}I}
 e^{-i \frac{(x -\alpha n)}{n^{1/\nu}} \theta} \partial_\theta^q
P_n(\theta)\ d\theta.\end{eqnarray*}
This justifies (\ref{trouble}). We note that a formula similar to
(\ref{trouble})
appears on page 124, equation (2.2), of the classic paper
by Ney and Spitzer \cite{NS} without warning or detailed explanations.

Observe that (\ref{Pgood1})-(\ref{Pgood2}) translate immediately into
\begin{equation}\label{Pgood1n}
|P_n(z)|\le \exp(-\gamma_1 u^\nu +\gamma_2 v^\nu) \mbox{ on }
\{z=u+iv: |u|\le 3n^{1/\nu}\pi/2\}
\end{equation}
and
\begin{equation}\label{Pgood2n}
|P_n(z)|\le \exp(\gamma_2 v^\nu)  \mbox{ on } \mathbb C .
\end{equation}

The crucial estimate is given by the  following proposition.
\begin{pro}\label{pro-Gauss}
There are constants $C_1,a_1\in (0,\infty)$ such that for any $q,n=0,1,2\dots,$ and
$\theta \in n^{1/\nu}I$,
$$
|\partial_\theta^qP_{n}(\theta)|\le  C_1^{1+q} q! q^{-q/\nu}
\exp\left(-a_1 \theta^{\nu}\right) .
$$
\label{partialq}
\end{pro}

Assuming that this proposition has been proved, we obtain
$$
\left(
\frac{|x-\alpha n|}{n^{1/\nu}}\right)^q
|\phin(x)|\le n^{-1/\nu}
C_2^{1+q} q! q^{-q/\nu}.
$$
This is of the form
$$
|\phin(x)|\le C_2 n^{-1/\nu} M^{-q} q^{(1-1/\nu)q},
\;\;M= \frac{|x-\alpha n|}{ C_2n^{1/\nu}}.
$$
Elementary calculus shows that
$$
\inf_{q=0,1,2,\dots}\{M^{-q} q^{(1-1/\nu)q}\}\le C\exp(- c
M^{\frac{\nu}{\nu-1}}),\;\; c=\frac{\nu}{e(\nu-1)}.
$$
This gives the upper bound stated in Theorem \ref{Gauss1}.

It remains to prove Proposition \ref{partialq}. By Cauchy's formula,
$$
\partial^q _\theta P_n(\theta)=
\frac{q!}{2\pi i}\int_{|\xi|=r}\frac{P_n(z)}{(\zeta-\theta)^{q+1}}d\zeta, \theta\in n^{1/\nu}I.
$$
Consider two cases.

If $q\le \theta^\nu $, pick
$r= \epsilon q^{1/\nu}$ with $\epsilon>0$ small enough (depending on
$\gamma_1,\gamma_2$ in \eqref{Pgood1n}) so that
$|P_n(z)|\le \exp(- \gamma_3 \theta^\nu)$ on $|z-\theta|=r$.
This easily gives the inequality of Proposition \ref{partialq} when
$q\le \theta^\nu$.

If, instead, $q> \theta^\nu$ then pick  $r= q^{1/\nu}$ and observe that,
on $|z-\theta|=r$,
$$
|P_n(z)|\le  \exp (2\gamma_2 r^\nu)\le
\exp( - a_1 \theta^\nu + (2\gamma_2+a_1) q).$$
This yields the desired estimate when $q>\theta^\nu$.
\end{proof}

A useful complement to Theorem \ref{Gauss1} involves ``regularity'' estimates
for $\phin(x)$. Namely, for any integer $y$ and function
$f: \mathbb Z\to \mathbb C$, set $\partial_yf(x)=f(x+y)-f(x)$.
\begin{theo}
Let $\phi:\mathbb Z\rightarrow \mathbb C$
be a finitely supported function such that {\em (\ref{good})} holds true and
$|\hphi(\theta) |<1$ on $I^*$.
Then there are constants $A,C,c\in (0,\infty)$ such that
for any $x\in \mathbb Z$, $n\in \mathbb N^*$ and  any
$y_1,\dots,y_m\in \mathbb Z$ with $|y_j|\le A n^{1/\nu}$, $j=1,\dots,m$,
we have
$$
|\partial_{y_1}\cdots \partial_{y_m} \phin(x) |\le
\frac{C^m\prod_1^m |y_j|}{n^{(1+m)/\nu}}\exp\left( -c
\left(\frac{|x-\alpha n|}{n^{1/\nu}}\right) ^{\frac{\nu}{\nu-1}}\right).
$$
In particular, there exists $C_1$ such that for all $n\ge 1$ and
$y_1,\dots,y_m\in \mathbb Z$ with $|y_j|\le A n^{1/\nu}$, $j=1,\dots,m$, we have
$$\sum_{x\in \mathbb Z}|\partial_{y_1}\cdots \partial_{y_m} \phin(x) |\le
C_1\frac{C^m\prod_1^m |y_j|}{n^{m/\nu}}.$$
\label{Gauss2}
\end{theo}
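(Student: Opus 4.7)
The plan is to reduce the estimate to the Fourier machinery of Theorem \ref{Gauss1} by observing that each difference $\partial_{y_j}$ corresponds on the Fourier side to the multiplier $e^{-iy_j\theta}-1$. Starting from
\begin{equation*}
\partial_{y_1}\cdots\partial_{y_m}\phi^{(n)}(x)=\frac{1}{2\pi}\int_I e^{-ix\theta}\prod_{j=1}^m\bigl(e^{-iy_j\theta}-1\bigr)\hat\phi(\theta)^n\,d\theta,
\end{equation*}
I would perform the same rescaling $\theta=u/n^{1/\nu}$ and drift extraction $P(\theta)=e^{-i\alpha\theta}\hat\phi(\theta)$, $P_n(u)=P(u/n^{1/\nu})^n$ as in Theorem \ref{Gauss1}, so that the problem reduces to analyzing the Fourier integral of $e^{-i(x-\alpha n)u/n^{1/\nu}}Q_n(u)$ on $n^{1/\nu}I$, where $Q_n(u)=\prod_j(e^{-iy_ju/n^{1/\nu}}-1)P_n(u)$. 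The elementary Lipschitz-type inequality $|e^{-i\beta}-1|\le|\beta|$ already factors out the desired prefactor, giving the pointwise bound $|Q_n(u)|\le(\prod_{j=1}^m|y_j|/n^{m/\nu})\,|u|^m\,|P_n(u)|$.

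To upgrade this to the stretched-exponential decay in $(x-\alpha n)/n^{1/\nu}$, I would integrate by parts $q$ times in $u$ just as in Theorem \ref{Gauss1}, using the same $\delta$-operator device to handle the fact that neither $e^{-i(x-\alpha n)u/n^{1/\nu}}$ nor $Q_n(u)$ is periodic of period $2\pi n^{1/\nu}$, arriving at
\begin{equation*}
\Bigl(\tfrac{|x-\alpha n|}{n^{1/\nu}}\Bigr)^q\bigl|\partial_{y_1}\cdots\partial_{y_m}\phi^{(n)}(x)\bigr|\le\frac{1}{2\pi n^{1/\nu}}\int_{n^{1/\nu}I}\bigl|\partial_u^q Q_n(u)\bigr|\,du.
\end{equation*}
The key technical step is a Cauchy-formula bound on $\partial_u^qQ_n$ viewed as an entire function of $z=u+iv$, mirroring Proposition \ref{pro-Gauss}. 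Using $|e^{-iy_jz/n^{1/\nu}}-1|\le(|y_j||z|/n^{1/\nu})\exp(|y_j||z|/n^{1/\nu})$ together with the complex bounds \eqref{Pgood1n}--\eqref{Pgood2n} for $P_n$, one works on a contour $|z-\theta|=r$ with $r$ of order $q^{1/\nu}$ in the two regimes $q\le\theta^\nu$ and $q>\theta^\nu$ of the proof of Proposition \ref{pro-Gauss}. The hypothesis $|y_j|\le An^{1/\nu}$ is precisely what ensures that the spurious growth factor $\exp(mA(|\theta|+r))$ produced by the difference factors is absorbed by the decay $\exp(-\gamma_3\theta^\nu)$ of $P_n$ for $\nu\ge 2$. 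Optimizing $q$ exactly as in the final step of Theorem \ref{Gauss1} then produces the stated decay $\exp\bigl(-c(|x-\alpha n|/n^{1/\nu})^{\nu/(\nu-1)}\bigr)$. The $\ell^1$ statement follows at once by summing the pointwise bound over $x\in\mathbb Z$: the change of variable $z=(x-\alpha n)/n^{1/\nu}$ turns the sum into a Riemann sum whose value is a constant multiple of $n^{1/\nu}$, cancelling one $n^{-1/\nu}$ and leaving the claimed $n^{-m/\nu}$ rate.

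Because the structure of the argument is a faithful adaptation of the proof of Theorem \ref{Gauss1}, the main obstacle is purely one of bookkeeping: one must verify that the clean factorization $|Q_n(u)|\le(\prod|y_j|/n^{m/\nu})|u|^m|P_n(u)|$ coming from the real-axis inequality $|e^{-i\beta}-1|\le|\beta|$ survives the passage to a complex contour of nonzero radius, so that the prefactor $\prod|y_j|$ is preserved through the Cauchy bound on $\partial_u^qQ_n$. The constraint $|y_j|\le An^{1/\nu}$ is exactly the right balance that makes this possible, and, as is standard for such pointwise derivative estimates, the constant $C$ should be allowed to depend on $m$ through a $\Gamma((m+1)/\nu)$-type factor arising from $\int|u|^m e^{-cu^\nu}\,du$.
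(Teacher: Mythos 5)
Your proof is correct and follows essentially the same route as the paper: both express $\partial_{y_1}\cdots\partial_{y_m}\phi^{(n)}(x)$ as a rescaled Fourier integral of $Q_n(u)=\prod_j(e^{\pm iy_ju/n^{1/\nu}}-1)P_n(u)$, use the Lipschitz bound on each difference factor to extract the prefactor $\prod|y_j|/n^{m/\nu}$, transfer the complex-variable bounds \eqref{Pgood1n}--\eqref{Pgood2n} from $P_n$ to $Q_n$ (with the hypothesis $|y_j|\le An^{1/\nu}$ controlling the extra exponential growth on the Cauchy contour), and then rerun the Proposition~\ref{pro-Gauss} integration-by-parts and optimization-in-$q$ scheme. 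The only difference is cosmetic: you spell out the complex bound $|e^{w}-1|\le|w|e^{|w|}$ and flag the $m$-dependence of the constants, steps the paper leaves implicit in its ``Clearly, ... \eqref{Pgood1} gives'' reduction.
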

\begin{proof}
Observe that
$$
\partial_{y_1}\cdots \partial_{y_m} \phin(x)
= \frac{1}{2\pi n^{1/\nu}}\int_{n^{1/\nu}I}
 e^{-i \frac{(x -\alpha n)}{n^{1/\nu}} \theta} Q_n(\theta) d\theta
$$
where
$$
Q_n(z)= \prod_{j=1}^m (e^{iy_jz/n^{1\nu}}-1)P_n(z).
$$
Clearly, for $\epsilon>0$ small enough, if $z=u+iv$
with $|u|\le 3n^{1/\nu}\pi/2$ and $|v|\le \epsilon |u|$,
\eqref{Pgood1} gives
$$
|Q_{n}(z)|\le  C_1^m\left(\prod_{j=1}^m \frac{|y_i|}{n^{1/\nu}}\right)
e^{-\gamma_4 u^\nu}.
$$
Further for all $z\in \mathbb C$, \eqref{Pgood2} gives
$$
|Q_{n}(z)|\le  C_1^m\left(\prod_{j=1}^m \frac{|y_i|}{n^{1/\nu}}\right)
e^{\gamma_5 v^\nu}.
$$
Using these estimates in place of \eqref{Pgood1}--\eqref{Pgood2}
and the line of reasoning of  the proof of Theorem \ref{Gauss1} gives the
desired estimate.
\end{proof}

We end by stating explicitly the most general result obtained in this section.

\begin{theo}
Let $\phi:\mathbb Z\rightarrow \mathbb C$
be a finitely supported function such that there exists $\theta_0\in I=(-\pi,\pi]$ such that
$$|\hphi(\theta_0)|=1 \mbox{ and }\;|\hphi(\theta)|<1 \mbox{ on } I\setminus\{\theta_0\}.$$
Assume further that there exist $\alpha\in \mathbb R$, $\gamma\in \mathbb C$ with $\mbox{Re}(\gamma)>0$, and an even integer $\nu$ such that
\begin{equation}
\hphi(\theta_0+\theta)=\hphi(\theta_0)e^{i \alpha \theta -\gamma   \theta^\nu(1+o(1))} \mbox{ at } \theta=0.
\end{equation}
Then there are constants $A,C,c\in (0,\infty)$ such that
for any $x\in \mathbb Z$, $n\in \mathbb N^*$, $m=0,1,\dots$, and
$y_1,\dots,y_m\in \mathbb Z$ with $|y_j|\le A n^{1/\nu}$, $j=1,\dots,m$,
we have
$$
|\partial_{y_1}\cdots \partial_{y_m} [ \hphi(\theta_0)^{-n}e^{ix\theta_0}\phin(x)] |\le
\frac{C^m\prod_1^m |y_j|}{n^{(1+m)/\nu}}\exp\left( -c
\left(\frac{|x-\alpha n|}{n^{1/\nu}}\right) ^{\frac{\nu}{\nu-1}}\right).
$$
Further, there exists $\epsilon,\eta>0$ such that, on $|x-\alpha n|\le \epsilon n^{1/\nu}$,
$$ \mbox{\em Re}\left(\hphi(\theta_0)^{-n}e^{ix\theta_0}\phi^{(n)}(x)\right)\ge \eta n^{-1/\nu}.$$
\label{Gauss4}
\end{theo}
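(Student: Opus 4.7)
The plan is to reduce the statement to Theorems \ref{Gauss1}, \ref{Gauss2}, and \ref{th-extLL} by a modulation trick that shifts the maximum point of $\hphi$ to the origin. Define
$$\phi_0(x)=\hphi(\theta_0)^{-1}e^{ix\theta_0}\phi(x),$$
a finitely supported complex function on $\bbz$. Its Fourier transform is $\hat\phi_0(\theta)=\hphi(\theta_0)^{-1}\hphi(\theta_0+\theta)$, which satisfies $\hat\phi_0(0)=1$, $|\hat\phi_0|<1$ on $I\setminus\{0\}$, and
$$\hat\phi_0(\theta)=e^{i\alpha\theta-\gamma\theta^\nu(1+o(1))}\qquad\text{at }\theta=0,$$
so $\phi_0$ meets the hypotheses of the earlier theorems with $\theta_0$ replaced by $0$. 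Substituting $\psi=\theta+\theta_0$ in the Fourier inversion integral (and using $2\pi$-periodicity of the integrand) gives
$$\phi_0^{(n)}(x)=\hphi(\theta_0)^{-n}e^{ix\theta_0}\phi^{(n)}(x),$$
and since $\partial_y$ acts only in the $x$-variable, it commutes with this reduction.

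For the upper-bound claim of Theorem \ref{Gauss4} I would simply invoke Theorem \ref{Gauss2} applied to $\phi_0$: the inequality
$$|\partial_{y_1}\cdots\partial_{y_m}\phi_0^{(n)}(x)|\le\frac{C^m\prod_1^m|y_j|}{n^{(1+m)/\nu}}\exp\left(-c\left(\frac{|x-\alpha n|}{n^{1/\nu}}\right)^{\nu/(\nu-1)}\right)$$
is exactly what the theorem states after undoing the reduction above (and $m=0$ is just Theorem \ref{Gauss1}).

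For the lower bound on the real part, apply Theorem \ref{th-extLL} to $\phi_0$ with a single maximum point ($Q=Q_1=1$, $\xi_1=0$, $\alpha_1=\alpha$, $\beta_1=\gamma=b+ib'$ with $b=\mbox{Re}(\gamma)>0$). This yields the local limit asymptotic
$$\phi_0^{(n)}(x)=(bn)^{-1/\nu}H_{\nu,b'/b}\bigl((x-\alpha n)(bn)^{-1/\nu}\bigr)+o(n^{-1/\nu})$$
uniformly in $x\in\bbz$. The remark following Theorem \ref{th-extLL} records that $\mbox{Re}(H_{\nu,b'/b}(0))>0$, and together with the Schwartz regularity of $H_{\nu,b'/b}$ this yields $\delta>0$ and $\eta_0>0$ such that $\mbox{Re}(H_{\nu,b'/b}(y))\ge\eta_0$ for $|y|\le\delta$. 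Taking $\epsilon=\delta b^{1/\nu}$, for $|x-\alpha n|\le\epsilon n^{1/\nu}$ we have $|(x-\alpha n)(bn)^{-1/\nu}|\le\delta$ and therefore
$$\mbox{Re}(\phi_0^{(n)}(x))\ge\eta_0(bn)^{-1/\nu}+o(n^{-1/\nu})\ge\eta n^{-1/\nu}$$
for all $n$ larger than some $N_0$, with $\eta=\eta_0/(2b^{1/\nu})$.

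The only bookkeeping point is uniformity down to $n=1$: for the finitely many small $n<N_0$ for which the integer set $\{x\in\bbz:|x-\alpha n|\le\epsilon n^{1/\nu}\}$ is non-empty, one either shrinks $\epsilon$ until the set is empty or shrinks $\eta$ to absorb those finitely many values. I do not expect any genuine analytic obstacle; all the real content sits in Theorems \ref{Gauss2} and \ref{th-extLL}, and the role of Theorem \ref{Gauss4} is essentially to repackage those estimates uniformly after the modulation.
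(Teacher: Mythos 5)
Your proposal matches the paper's proof exactly: the paper also reduces to Theorems \ref{Gauss1}--\ref{Gauss2} by applying them to the modulated function $\phi_{\theta_0}(x)=\hphi(\theta_0)^{-1}e^{ix\theta_0}\phi(x)$, and derives the lower bound on the real part from Theorem \ref{th-extLL}. You have simply spelled out the details that the paper's one-line proof leaves implicit, including the convolution identity $\phi_{\theta_0}^{(n)}(x)=\hphi(\theta_0)^{-n}e^{ix\theta_0}\phi^{(n)}(x)$ and the small-$n$ bookkeeping.
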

\begin{proof} This follows from Theorems \ref{Gauss1}-\ref{Gauss2}
applied to  $\phi_{\theta_0}(x)=\hphi(\theta_0)^{-1}e^{ix\theta_0}\phi(x)$.
The last assertion follows from Theorem \ref{th-extLL}.
\end{proof}

\subsection{Sub-Gaussian bounds}

Examples such as \ref{3points}
show it is very natural to allow the Fourier transform
$\hphi$ to attain its maximum of $1$ at more than one point on $(-\pi, \pi]$.
This section explores briefly what global estimates can be obtained in such
cases. In the following theorem, $\hphi$ is allowed to attain its maximum, $1$,
at finitely many points. At each of this points, we assume that the dominant
non-linear term in the expansion of $\log \hphi$ is of an even degree.

\begin{theo}
Let $\phi:\mathbb Z\rightarrow \mathbb C$
be a finitely supported function such that there exists $\theta_q\in I=(-\pi,\pi]$, $q\in\{1,\dots, Q\}$ such that
$$|\hphi(\theta_q)|=1 \mbox{ and }\;|\hphi(\theta)|<1 \mbox{ on } I\setminus\{\theta_1,\dots,\theta_Q\}.$$
Assume further that for each $q\in \{1,\dots, Q\}$, there exist $\alpha_q\in \mathbb R$, $\gamma_q\in \mathbb C$ with $\mbox{Re}(\gamma_q)>0$, and an even integer $\nu_q$ such that
\begin{equation}
\hphi(\theta_q+\theta)=\hphi(\theta_q)e^{i \alpha_q \theta -\gamma_q   \theta^{\nu_q}(1+o(1))} \mbox{ at } \theta=0.
\end{equation}
Then, for each $N$, there is a constant $C_N$ such that
for any $x\in \mathbb Z$, $n=1,2,\dots$,
we have
\begin{equation}\label{estN}
|\phin(x) |\le  C_N\sum_1^Q
\frac{1}{n^{1/\nu_q}} \left(1+\frac{|x-\alpha_q n|}{n^{1/\nu_q}}\right) ^{-N}.
\end{equation}
\label{Gauss5}
\end{theo}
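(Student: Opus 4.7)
The plan is to imitate locally the argument from the proof of Theorem \ref{Gauss1} near each maximum point $\theta_q$, but since the target estimate \eqref{estN} only asks for polynomial decay rather than Gaussian decay, one can dispense with the sharp Cauchy-formula estimates used to produce the Gaussian exponent and rely instead on a standard Fourier integration by parts. Fix $\epsilon>0$ small enough that the expansion at each $\theta_q$ is valid on $I_q=[\theta_q-2\epsilon,\theta_q+2\epsilon]$, that these intervals are pairwise disjoint in $I=(-\pi,\pi]$, and that $|\hphi|\le\rho<1$ on $J=I\setminus\bigcup_q I_q$. Choose a smooth partition of unity $1=\chi_0+\sum_{q=1}^Q\chi_q$ on $I$ with $\chi_q\in C_c^\infty(I_q)$, $\chi_q\equiv 1$ on $[\theta_q-\epsilon,\theta_q+\epsilon]$, and $\chi_0$ supported in $J$, and split $2\pi\phin(x)$ accordingly. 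The $\chi_0$-piece is $O(\rho^n)$; since $\phi$ is finitely supported, $\phin(x)=0$ for $|x|>Kn$, and on this support each term $n^{-1/\nu_q}(1+|x-\alpha_q n|/n^{1/\nu_q})^{-N}$ on the right of \eqref{estN} dominates $\rho^n$, so this piece is absorbed.

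For each $q\ge 1$, I substitute $\theta=\theta_q+un^{-1/\nu_q}$ to rewrite
\[
\int_I e^{-ix\theta}\chi_q(\theta)\hphi(\theta)^n\,d\theta=\frac{e^{-ix\theta_q}\hphi(\theta_q)^n}{n^{1/\nu_q}}\int_{\mathbb R}e^{-iy_qu}\tilde\chi_q(u)P_{q,n}(u)\,du,
\]
where $y_q=(x-\alpha_q n)/n^{1/\nu_q}$, $\tilde\chi_q(u)=\chi_q(\theta_q+un^{-1/\nu_q})$, and
\[
P_{q,n}(u)=\bigl[\hphi(\theta_q)^{-1}e^{-i\alpha_q u n^{-1/\nu_q}}\hphi(\theta_q+un^{-1/\nu_q})\bigr]^n.
\]
The expansion hypothesis at $\theta_q$, combined with the Cauchy-formula argument used in the proof of Proposition \ref{partialq} applied to the shifted symbol $\hphi(\theta_q)^{-1}\hphi(\theta_q+\cdot)$ (which still satisfies the local analogues of \eqref{Pgood1}--\eqref{Pgood2} after shrinking $\epsilon$), gives uniform derivative bounds
\[
|\partial_u^k P_{q,n}(u)|\le C_k\exp(-c u^{\nu_q}),\qquad |u|\le 2\epsilon n^{1/\nu_q},\ k=0,1,\dots,N.
\]
The cutoff $\tilde\chi_q$ is itself uniformly smooth; each derivative $\partial_u^j\tilde\chi_q$ carries a factor $n^{-j/\nu_q}$ and is supported on $\{\epsilon n^{1/\nu_q}\le|u|\le 2\epsilon n^{1/\nu_q}\}$, a region where $|P_{q,n}|$ is exponentially small in $n$.

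The estimate then follows by integrating by parts $N$ times in $u$. There are no boundary terms because $\tilde\chi_q$ has compact support; expanding $\partial_u^N[\tilde\chi_q P_{q,n}]$ by Leibniz, the terms carrying at least one derivative on $\tilde\chi_q$ are negligible by the exponential smallness of $P_{q,n}$ on the support of $\tilde\chi_q^{(j)}$, while the term $\tilde\chi_q\,\partial_u^N P_{q,n}$ is integrable on $\mathbb R$ with $L^1$-norm bounded by $C_N$ uniformly in $n$. Combining $|\int e^{-iy_qu}\tilde\chi_qP_{q,n}\,du|\le C_N|y_q|^{-N}$ with the trivial bound $\le C$ yields $C_N(1+|y_q|)^{-N}$, which after substituting $y_q$ and multiplying by $n^{-1/\nu_q}$ is precisely the $q$-th summand of \eqref{estN}. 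Summing over $q$ completes the argument. The main technical obstacle is the uniform derivative bound on the rescaled profile $P_{q,n}$; since $|\hphi|$ is bounded strictly below $1$ on $I_q\setminus\{\theta_q\}$ after shrinking $\epsilon$, and the local expansion holds in this neighborhood, the Cauchy-formula argument of Proposition \ref{partialq} transplants essentially unchanged.
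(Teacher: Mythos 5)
Your proposal is correct and follows essentially the same strategy as the paper: localize near each maximum point $\theta_q$ with a smooth partition of unity, rescale by $n^{1/\nu_q}$, and integrate by parts $N$ times using uniform Cauchy-formula derivative bounds on the rescaled profile $P_{q,n}$ (the analogue of Proposition \ref{pro-Gauss}), the cutoff preventing boundary terms. The only cosmetic difference is that you carry a separate cutoff $\chi_0$ on the region away from the $\theta_q$'s and absorb its $O(\rho^n)$ contribution into the polynomial bound, whereas the paper takes the $\psi_q$'s themselves to be a partition of unity of the whole interval $I$.
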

\begin{rem} An immediate application of Theorem \ref{Gauss5} is that
there exists a constant $C$ such that, for all $n$,
$$\sum_x|\phi^{(n)}(x)|\le C.$$
That is, we recover the positive part of the stability theorem, Theorem
\ref{oldthmA11}.  Indeed, estimates such as (\ref{estN}) are more or less implicit
in the proof of Theorem \ref{oldthmA11} given in \cite{Thomee1,Thomee2}.
\end{rem}

\begin{proof} We need to introduce smooth non-negative cut-off functions $\psi_q$ such that each $\psi_q$ vanishes outside an interval $\theta_q+I_q=[\theta_q-\epsilon_q,\theta_q+\eta_q]$, $\psi_q\equiv 1$
on $[\theta_q-\epsilon_q/2,\theta_q+\eta_q/2]$,
$\theta_{q'}\not\in [\theta_q-3\epsilon_q/2,\theta_q+3\eta_q/2]$, $q'\neq q$, and  $\sum_1^Q \psi_q\equiv 1$ (i.e., the $\psi_q$'s form a partition of unity). Using these cut-off functions, we write
\begin{eqnarray*}
\phin(x)&=& \frac{1}{2\pi}
\int_I e^{-i x\theta} \hphi(\theta)^n d\theta
= \sum_1^Q\frac{1}{2\pi}\int_{I} e^{-ix \theta} \hphi(\theta)^n \psi_q (\theta) d\theta\\
&=& \sum_1^Q\frac{e^{-i x\theta_q}\hphi(\theta_q)^n}{2\pi}\int_{I} e^{-i(x-\alpha_qn) \theta}[ \hphi(\theta_q)^{-1}e^{-\alpha_q \theta} \hphi(\theta_q+\theta)]^n \psi_q(\theta_q+\theta)d\theta\\
&=&\sum_1^Q\frac{e^{-i x\theta_q}\hphi(\theta_q)^n}{2\pi}\int_{I_q} e^{-i(x-\alpha_qn) \theta} P_q(\theta)^n \psi_q(\theta_q+\theta) d\theta\\
&=&\sum_1^Q\frac{e^{-i x\theta_q}\hphi(\theta_q)^n}{2\pi n^{1/\nu_q}}\int_{n^{1/\nu_q}I_q} e^{-i \frac{(x-\alpha_qn)}{n^{1/\nu_q}} \theta} P_{q,n}(\theta)\psi_{q,n}(\theta)d\theta
\end{eqnarray*}
where
\begin{eqnarray*}
P_{q}(\theta)&=&\hphi(\theta_q)^{-1}e^{-\alpha_q \theta} \hphi(\theta_q+\theta),\\
P_{q,n}(\theta) &=& P_q(\theta/n^{1/\nu_q})^n,\\
\psi_{q,n}(\theta)&=&\psi_q(\theta_q+\theta/n^{1/\nu_q}).
\end{eqnarray*}
By hypothesis,
$$P_q(\theta)= e^{-\gamma \theta^{\nu_q}(1+o(1))}\,\,\mbox{ at } 0.$$
The function $P_q$ can be viewed as an entire function of
$z=u+iv\in \mathbb C$ which is periodic in $u$ and has at most exponential growth in $|v|$ for large $|v|$. Hence there are constants $\gamma_1,\gamma_2\in (0,\infty)$ such that
\begin{equation}\label{Pgood1q}
|P_q(z)|\le \exp(-\gamma_1 u^{\nu_q} +\gamma_2 v^{\nu_q}) \mbox{ on }
\{z=u+iv: u \in (-3\epsilon_q/2,3\eta_q/2) \}
\end{equation}
and
\begin{equation}\label{Pgood2q}
|P_q(z)|\le \exp(\gamma_2 v^{\nu_q})  \mbox{ on } \mathbb C .
\end{equation}
A priori, the constants $\gamma_1,\gamma_2$ depend on $q$ but since there are only finitely many $q$, one can assume that they are the same for all $q$.

Next, for each $q\in \{1,\dots, Q\}$, consider
$$\mathcal I_q= \frac{1}{2\pi n^{1/\nu_q}}\int_{n^{1/\nu_q}I} e^{-i \frac{(x-\alpha_qn)}{n^{1/\nu_q}} \theta} P_{q,n}(\theta)\psi_{q,n}(\theta)d\theta.$$
Observe that
\begin{eqnarray}
\left(-i\frac{(x-\alpha_q n)}{n^{1/\nu_q}}\right)^k \mathcal I_q&=&
\frac{1}{2\pi n^{1/\nu_q}}\int_{n^{1/\nu_q}I}\partial_\theta^k \left[e^{-i \frac{(x-\alpha_qn)}{n^{1/\nu_q}} \theta}\right] P_{q,n}(\theta)\psi_{q,n}(\theta)d\theta \nonumber\\
&=&
\frac{1}{2\pi n^{1/\nu_q}}\int_{n^{1/\nu_q}I_q} e^{-i \frac{(x-\alpha_qn)}{n^{1/\nu_q}} \theta} \partial^k_\theta
[P_{q,n}(\theta)\psi_{q,n}(\theta)] d\theta.  \label{split}
\end{eqnarray}
The iterated integration by parts performed to obtain the second equality does not produce boundary terms thanks to the cutoff function $\psi_{q,n}$. The following proposition is analogous to Proposition \ref{pro-Gauss} and the proof is the same.
\begin{pro}\label{pro-Gaussq}
There are constants $C_1,a_1\in (0,\infty)$ such that for any $q,n=0,1,2\dots,$ and
$\theta \in  (-n^{1/\nu_q}\epsilon_q,n^{1/\nu_q}\eta_q)$,
$$
|\partial_\theta^kP_{q,n}(\theta)|\le  C_1^{1+k} k! k^{-k/\nu}
\exp\left(-a_1 \theta^{\nu_q}\right) .
$$
\end{pro}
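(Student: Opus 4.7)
The plan is to mimic, step for step, the proof of Proposition \ref{pro-Gauss}, using the estimates (\ref{Pgood1q}) and (\ref{Pgood2q}) on $P_q$ as a replacement for (\ref{Pgood1}) and (\ref{Pgood2}). The only genuine change is keeping track of the fact that the relevant real interval is now the (possibly asymmetric) window $(-n^{1/\nu_q}\epsilon_q,\,n^{1/\nu_q}\eta_q)$, but this plays no role in the Cauchy estimate, which is local in $\theta$.

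First, I would rescale: since $P_{q,n}(\theta)=P_q(\theta/n^{1/\nu_q})^n$ is entire in $z=u+iv$, the estimates (\ref{Pgood1q})--(\ref{Pgood2q}) translate directly to
\begin{equation*}
|P_{q,n}(z)|\le \exp(-\gamma_1 u^{\nu_q}+\gamma_2 v^{\nu_q})\quad\text{on }\{|u|\le \tfrac{3}{2}\max(\epsilon_q,\eta_q)\,n^{1/\nu_q}\},
\end{equation*}
and $|P_{q,n}(z)|\le \exp(\gamma_2 v^{\nu_q})$ on all of $\mathbb C$, with the constants $\gamma_1,\gamma_2$ chosen uniformly in $q$ (there are only finitely many $q$'s). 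Then Cauchy's formula gives
\begin{equation*}
\partial^k_\theta P_{q,n}(\theta)=\frac{k!}{2\pi i}\int_{|\zeta-\theta|=r}\frac{P_{q,n}(\zeta)}{(\zeta-\theta)^{k+1}}\,d\zeta,
\end{equation*}
valid for any $\theta$ inside the allowed real window and any $r>0$ for which the circle stays inside the region where our improved bound applies (or, if not, using only the global bound).

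Next I would choose $r$ in a $k$-dependent way, exactly as in the proof of Proposition \ref{pro-Gauss}. If $k\le \theta^{\nu_q}$, pick $r=\epsilon\, k^{1/\nu_q}$ with $\epsilon>0$ so small (depending on $\gamma_1,\gamma_2$) that on $|\zeta-\theta|=r$ the real part $u$ stays comparable to $\theta$ and the imaginary part $v$ is bounded by $\epsilon k^{1/\nu_q}\le \epsilon\theta$; the first estimate then gives $|P_{q,n}(\zeta)|\le \exp(-\gamma_3\theta^{\nu_q})$ on the circle, and Cauchy's formula yields $|\partial^k_\theta P_{q,n}(\theta)|\le C^{1+k}k!\,k^{-k/\nu_q}\exp(-\gamma_3\theta^{\nu_q})$. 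If instead $k>\theta^{\nu_q}$, pick $r=k^{1/\nu_q}$; the global bound gives $|P_{q,n}(\zeta)|\le \exp(2\gamma_2 r^{\nu_q})=\exp(2\gamma_2 k)\le \exp(-a_1\theta^{\nu_q}+(2\gamma_2+a_1)k)$, which, after absorbing the $e^{(2\gamma_2+a_1)k}$ into $C_1^{1+k}$, again produces the desired bound.

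The main, and essentially only, technical point to check is that the contour $|\zeta-\theta|=r$ with the above choices of $r$ remains inside the strip where (\ref{Pgood1q}) is applicable; for the first case this holds because $r\le \epsilon\theta\le \epsilon n^{1/\nu_q}\max(\epsilon_q,\eta_q)$, while in the second case we use only the global bound (\ref{Pgood2q}) and so no such restriction is needed. The constant $a_1$ is then any positive number strictly less than $\gamma_3$ (and less than the implicit constant arising in the second case), and $C_1$ absorbs all the $q$-independent prefactors. This completes the proof sketch; the only obstacle worth flagging is bookkeeping to ensure the constants can be chosen uniformly in $q$, which follows from the finiteness of the index set $\{1,\dots,Q\}$.
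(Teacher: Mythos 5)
Your proof is correct and follows exactly the approach the paper intends: the paper states that the proof of Proposition \ref{pro-Gaussq} is ``the same'' as that of Proposition \ref{pro-Gauss}, and you have faithfully transcribed that Cauchy-formula argument, splitting into the cases $k\le\theta^{\nu_q}$ (local bound (\ref{Pgood1q}), small radius $r=\epsilon k^{1/\nu_q}$) and $k>\theta^{\nu_q}$ (global bound (\ref{Pgood2q}), radius $r=k^{1/\nu_q}$), with the scaled estimates on $P_{q,n}$ obtained by the $\theta\mapsto\theta/n^{1/\nu_q}$ substitution. Two tiny bookkeeping remarks: since $\theta$ may be negative and $\nu_q$ is even you should write $r\le\epsilon|\theta|$ rather than $r\le\epsilon\theta$, and the condition that the circle stays in the strip where (\ref{Pgood1q}) holds is most cleanly phrased via $\min(\epsilon_q,\eta_q)$ rather than $\max$ (the available margin past either endpoint is $\tfrac12\epsilon_q n^{1/\nu_q}$ or $\tfrac12\eta_q n^{1/\nu_q}$), though the conclusion is unaffected; also note the exponent in the statement should read $k^{-k/\nu_q}$ rather than $k^{-k/\nu}$, a typo in the paper that you implicitly and correctly fix.
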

Using (\ref{split}) and Proposition \ref{pro-Gaussq}, we obtain
\begin{eqnarray}
\left|\frac{(x-\alpha_q n)}{n^{1/\nu_q}}\right|^k \mathcal I_q&\le&
\frac{C_k}{ n^{1/\nu_q}}.\end{eqnarray}
The desired result follows. The necessity to separate the contributions of the different $\theta_q$  via the use of cutoff functions prevents us to obtain
more precise upper bound in this case.\end{proof}

\section{Carne's transmutation formula}\label{sec5}

This section develops a version of Carne's transmutation formula to
obtain universal long range upper bound on $M_k^n(x,y)$ where
$M$ is a (finite range)  normal contraction  acting on $\ell^2(X,\pi)$.
Here, $X$ is a countable space,  $\pi$ a positive measure, $M$ acts on
$\ell^2(X,\pi)$ by
$$
Mf(x)=\sum_y M(x,y)f(y),
$$
$k$ is a fixed integer and
$$
M_k=I-(I-M)^k=\sum (-1)^j\binom {k}{j} M^j.
$$
If we think of $I-M$ as a "Laplacian", then $M_k^n$ is the discrete semigroup of operators associated with the $k$-th power of this Laplacian, namely,  $(I-M)^k$.

Assume that $X$ is equipped with a metric $d$ such that
\begin{equation}\label{FR}
d(x,y)>1 \Longrightarrow M(x,y)=0.
\end{equation}
This is a finite range condition in terms of the metric $d$. If the matrix $M$
is finite range in the sense that for each $x$ there are finitely many
$y$ such that $M(x,y)\neq 0$ then we can define the metric $d_M$ by
$$d_M(x,y)=\inf\{ k: M^k(x,y)\neq 0\}.$$
This metric obviously has property (\ref{FR}) (we do not have to assume
that $d_M$ is finite for all $x,y$).

Our goal is to obtain bounds
showing that $M_k^n(x,y)$ is small when $d(x,y)$ is large when compared to $n^{1/(2k)}$.
In the classical case where $k=1$ and $M$ is the transition matrix of
a reversible Markov chain, the Carne-Varopoulos estimate states
that
$$
|M^n(x,y)|\le 2\left(\frac{\pi(x)}{\pi(y)}\right)^{1/2}
\exp\left(-\frac{d(i,j)^2}{2n}\right).
$$
This bound is remarkable for its generality and explicitness. To see it in action for Markov chains on $\bbz^d$,  see \cite{barlow}. For  an extension to non-reversible chains, see \cite{Mathieu}. For a probabilistic interpretation, see \cite{peyre}.

Our result reads as follows.
\begin{theo} Let $(X,d)$ be a countable metric space equipped with a positive
measure $\pi$. Fix $k$ and $s_0\in (0,2^{-1+1/k})$.
There exist constants $C,c\in (0,\infty)$ (depending on $k$ and $s_0$) such that, for any normal contraction $M$  on $\ell^2(X,\pi)$ satisfying {\em (\ref{FR})} and  whose spectrum is contained in $[a,1]$
with $a\in [-1,1)$ and $1-a \le 2s_0$,
$$\forall x,y\in X,\;n=1,\dots,\;\;
|M_k^n(x,y)|\le C\left(\frac{\pi(x)}{\pi(y)}\right)^{1/2}
\exp\left(- c\left(\frac{d(x,y)}{n^{1/(2k)}}\right)
^{\frac{2k}{2k-1}}\right).
$$
\label{th-Carne}
\end{theo}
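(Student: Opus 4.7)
The plan is to establish a ``higher-order Carne transmutation'' that writes $M_k^n$ as a finite sum of polynomials of $M$, each of controlled range, whose coefficients are the convolution powers of a carefully normalized complex function on $\mathbb Z$ to which Theorem \ref{thm11} applies. The key preliminary is a rescaling of the spectrum made possible by the assumption $1-a \le 2s_0 < 2^{1/k}$. Set $s = (1-a)/2 \le s_0$ and introduce the affine bijection $u \mapsto \tilde u = (2u-1-a)/(1-a)$ from $[a,1]$ onto $[-1,1]$. Writing $\tilde u = \cos\theta$, one has $1-u = s(1-\cos\theta)$, whence
\[
\bigl(1-(1-u)^k\bigr)^n = \bigl(1 - s^k(1-\cos\theta)^k\bigr)^n = \hat{\tilde\phi}(\theta)^n,
\]
where $\tilde\phi = \delta_0 - s^k(\delta_0-\beta)^k$ is exactly the function of Proposition \ref{prop-B} with $\lambda = s^k \in (0, 2^{-k+1})$. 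This choice ensures $|\hat{\tilde\phi}|\le 1$ on $(-\pi,\pi]$ with equality only at $0$, and near $0$ one has $\hat{\tilde\phi}(\theta) = \exp\bigl(-(s/2)^k \theta^{2k}(1+o(1))\bigr)$. Hence Theorem \ref{thm11} applies with $\nu = 2k$ and yields
\[
|\tilde\phi^{(n)}(j)| \le C\, n^{-1/(2k)} \exp\!\Bigl(-c\bigl(|j|/n^{1/(2k)}\bigr)^{2k/(2k-1)}\Bigr).
\]

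Since $\tilde\phi$ is real and symmetric with support in $\{-k,\dots,k\}$, the Fourier expansion of $\hat{\tilde\phi}(\theta)^n$ terminates, and in terms of the Chebyshev polynomials $T_j$ of the first kind we get, for $\tilde u = \cos\theta \in [-1,1]$,
\[
\bigl(1-s^k(1-\cos\theta)^k\bigr)^n = \tilde\phi^{(n)}(0) + 2\sum_{j=1}^{kn} \tilde\phi^{(n)}(j)\, T_j(\tilde u).
\]
Setting $\tilde T_j(u) := T_j((2u-1-a)/(1-a))$, a polynomial of degree exactly $j$ in $u$, this becomes a polynomial identity in $u$. Since $M$ is normal with real spectrum contained in $[a,1]$, it is self-adjoint, and the spectral calculus yields the transmutation identity
\[
M_k^n = \tilde\phi^{(n)}(0)\, I + 2\sum_{j=1}^{kn} \tilde\phi^{(n)}(j)\, \tilde T_j(M).
\]

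Two properties of $\tilde T_j(M)$ now combine to produce the bound. First, because $\tilde T_j$ is a polynomial of degree exactly $j$ in $M$, the finite-range hypothesis (\ref{FR}) forces $\tilde T_j(M)(x,y) = 0$ whenever $d(x,y) > j$. Second, $|\tilde T_j|\le 1$ on $[a,1]$ and the spectrum of $M$ lies in $[a,1]$, so $\|\tilde T_j(M)\|_{\mathrm{op}}\le 1$. The standard Cauchy--Schwarz bound $|T(x,y)|\le \|T\|_{\mathrm{op}}\sqrt{\pi(y)/\pi(x)}$ for bounded operators on $\ell^2(X,\pi)$ then gives, for $d(x,y)\ge 1$,
\[
|M_k^n(x,y)| \le 2\sqrt{\pi(y)/\pi(x)} \sum_{j \ge d(x,y)} |\tilde\phi^{(n)}(j)|.
\]
Inserting the pointwise bound above and estimating the tail sum by comparison with $n^{1/(2k)}\int_r^\infty \exp(-c\, t^{2k/(2k-1)})\, dt$ (with $r = d(x,y)/n^{1/(2k)}$) yields the desired factor $\exp\bigl(-c'(d(x,y)/n^{1/(2k)})^{2k/(2k-1)}\bigr)$. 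Reversibility of $M_k^n$, namely $M_k^n(x,y)\pi(x) = \overline{M_k^n(y,x)}\pi(y)$, allows one to swap the ratio to the form $\sqrt{\pi(x)/\pi(y)}$ stated in the theorem.

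The main technical obstacle is the rescaling step itself: without the spectral restriction $1-a \le 2s_0 < 2^{1/k}$, the natural candidate $\phi = \delta_0 - (\delta_0-\beta)^k$ satisfies $|\hat\phi(\pi)| = 2^k-1 > 1$ for $k\ge 2$, so none of the Fourier-analytic bounds of Section \ref{sec4} apply. Rescaling shrinks the effective portion of the spectrum so that $|\hat{\tilde\phi}|\le 1$ with a single maximum of even vanishing order $2k$ at $\theta=0$, which is exactly what produces the sharp pointwise bound with exponent $2k/(2k-1)$. Once this alignment is in place, the remainder is a routine combination of Chebyshev calculus, the spectral theorem for self-adjoint operators, and a standard sub-Gaussian tail estimate.
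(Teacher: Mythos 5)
Your argument is essentially the paper's own proof: rescale the spectrum via $s=(1-a)/2\le s_0$ so that $\psi_{s,k}=\delta_0-s^k(\delta_0-\beta)^k$ has $|\hat\psi_{s,k}|\le 1$ with a unique maximum of even order $2k$ at $0$, expand $M_k^n$ in the Chebyshev polynomials $\tilde T_j$ adapted to $[a,1]$, and combine the sub-Gaussian decay of $\psi_{s,k}^{(n)}$ (Theorem~\ref{thm11}, proved via Theorem~\ref{Gauss1}) with the finite-range vanishing of $\tilde T_j(M)(x,y)$ and the contraction bound $\|\tilde T_j(M)\|\le 1$. The only cosmetic difference is that the paper quotes Carne's transmutation formula for $M^n$ and deduces the expansion of $M_k^n$ from it, whereas you derive the same Chebyshev identity for $M_k^n$ directly; the two are identical underneath. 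One small slip: reversibility (self-adjointness) of $M$ does not flip $\sqrt{\pi(y)/\pi(x)}$ into $\sqrt{\pi(x)/\pi(y)}$ --- plugging $M_k^n(x,y)\pi(x)=\overline{M_k^n(y,x)}\pi(y)$ into the swapped bound just reproduces $\sqrt{\pi(y)/\pi(x)}$ --- but this is inconsequential, since the paper's own computation with $\phi_1=\delta_x$, $\phi_2=\delta_y$ produces the same ratio as yours.
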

\begin{rem} Proving that an operator is normal and computing
its spectrum is not an easy task, in general. Most application of
Theorem \ref{th-Carne} are likely to involve cases when $M$ is a
hermitian contraction. In this case the hypothesis that the
spectrum is contained in $[a,1]$ is a very natural hypothesis since the spectrum is real and contained in $[-1,1]$.
\end{rem}

\begin{proof}
Following \cite{Carne}, consider the Chebyshev polynomials
$$
Q_m(z)=\frac{1}{2}\left((z+(z^2-1)^{1/2})^m+(z-(z^2-1)^{1/2})^m\right),
\;\;m\in \mathbb Z.
$$
Each $Q_m$ is in fact a polynomial of degree $|m|$.
For $a\in [-1,1]$, let
$$
Q_{a,m}(z)=Q_k\left(\frac{2z-1-a}{1-a}\right).
$$
These are the Chebyshev polynomials for the interval $[a,1]\subset [-1,1]$.
See \cite[Theorem 2']{Carne}.
As explained in \cite[Sect.~2]{Carne}, assuming that the spectrum of $M$
on $\ell^2(X,\pi)$ is contained in $[a,1]$, it holds that
$$
Q_{a,m}(M):\ell^2(X,\pi)\to\ell^2(X,\pi) \mbox{ is a contraction}
$$
and
\begin{equation}\label{Carne}
M^n=\sum_m\mathbf P_{0}(X^a_n=m)Q_{a,m}(M)
\end{equation}
where $X^a_n$ is the simple random walk on $\mathbb Z$ with transition
probabilities
$$
\mathbb P(X_n=\pm 1|X_{n-1})=(1-a)/4,\;\;\mathbb P(X_n=0|X_{n-1})=(1+a)/2.
$$
Consider the measures $\beta_s$ on $\mathbb Z$  where $\beta$ is the
Bernoulli measure $\beta(\pm 1)=1/2$ and
$$
\beta_s=(1-s)\delta_0+s\beta, \;\;s\in [0,1].
$$
Further, set
$$
\Delta_sf= f*(\delta_0-\beta_s),\;\;
\psi_{s,k}= \delta_0-(\delta_0-\beta_s)^{*k}.
$$
By definition,
$$
\mathbb P_0(X^a_n=m)= \beta_{s}^{(n)}(m),\;\;s=(1-a)/2.
$$
Also, by \eqref{Carne},
\begin{equation}\label{Carnek}
M_k^n=(I-(I-M)^k)^n= \sum_{m\in \mathbb Z} \psi_{s,k}^{(n)}(m)Q_{a,m}(M).
\end{equation}
Similarly,
\begin{equation}\label{Carnekl}
(I-M)^\ell M_k^n=
\sum_{m\in \mathbb Z} \Delta_{s}^\ell\psi_{s,k}^{(n)}(m)Q_{a,m}(M).
\end{equation}
Now, let $\phi_i$, $i=1,2$, be two functions on $X$ with support in
$B_i=B(x_i,r_i)$ and assume that $d(B_1,B_2)\ge r$. Then
$$
\langle (I-M)^{\ell}M_k^n \phi_1,\phi_2\rangle_\pi
= \sum_{m\in \mathbb Z} \Delta_{s}^\ell\psi_{s,k}^{(n)}(m)\langle
Q_{a,m}(M) \phi_1,\phi_2\rangle_\pi.
$$
Further, since $Q_{a,m}(M)$ is an $\ell^2(X,\pi)$ contraction and a polynomial in $M$ of degree $|m|$,
$$
|\langle Q_{a,m}(M) \phi_1,\phi_2\rangle_\pi| \le \|\phi_1\|_2\|\phi_2\|_2
$$
and
$$
\langle Q_{a,m}(M) \phi_1,\phi_2\rangle_\pi=0 \mbox{ if } |m|< r.
$$
This last property follows from the hypothesis that $M(x,y)=0$ if $d(x,y)>1$ which
implies $M^i(x,y)=0$ is $d(x,y)>i\ge 0$.
Putting these properties together, we obtain
$$
|\langle (I-M)^{\ell}M_k^n \phi_1,\phi_2\rangle_\pi|
\le  \|\phi_1\|_2\|\phi_2\|_2\sum_{|m|\ge r} |\Delta^\ell\psi_{s,k}^{(n)}(m)|,\;
s=(1-a)/2.
$$

For $0<s=(1-a)/2\le s_0<2^{-1+1/k}$,  the Fourier transform
$$
\hat\psi_{s,k}
(\theta)= 1-s^k(1-\cos \theta)^k)
$$
satisfies
$|\hat \psi_{s,k}|<1 $ on $I^*$ and \eqref{good} with $\nu=2k$ and
$\gamma= (s/2)^k$. One can conclude that there are constants
$C,c\in (0,\infty)$ depending only on $s$ such that
\begin{equation}
|\langle (I-M)^{\ell}M_k^n \phi_1,\phi_2\rangle_\pi|
\le \frac{C^\ell  \|\phi_1\|_2\|\phi_2\|_2}{(1+n)^{\ell/k}} \exp\left(- c \left(\frac{r}{n^{1/(2k)}}\right)^{\frac{2k}{2k-1}}\right).
\label{oldA34}
\end{equation}
The inequality stated in Theorem \ref{th-Carne} follows by taking
$\phi_1=\delta_{x}$, $\phi_2=\delta_{x_2}$, $x_1=x,x_2=y$, $r_1,r_2=0$,
$r=d(x,y)$.
\end{proof}

Inequality (\ref{oldA34}) contains more information than captured in Theorem \ref{th-Carne}. In particular, we get the following result.

\begin{theo}
Let $(X,d)$ be a countable metric space equipped with a positive measure $\pi$. Fix $k$ and
$s_0\in (0,2^{-1+1/k})$.
There exist constants $C,c\in (0,\infty)$ (depending on $k$ and $s_0$) such that, for any normal contraction $M$  on $\ell^2(X,\pi)$ satisfying {\em (\ref{FR})} and  whose spectrum is contained in $[a,1]$
with $a\in [-1,1)$ and $1-a \le 2s_0$, we have
$$
|(I-M)^\ell M_k^n(x,y)|\le C^{\ell}\left(\frac{\pi(x)}{\pi(y)}\right)^{1/2}
(1+n)^{-\ell/k}\exp\left(- c\left(\frac{d(x,y)}{n^{1/(2k)}}\right)
^{\frac{2k}{2k-1}}\right).
$$
Further, if $A_x(r,R)=\{y: r < d(x,y)\le R\}$ then
$$\sum_{y\in A_x(r,R)}|(I-M)^\ell M_k^n(x,y)|^2\pi(y)\le \frac{C^{\ell}\pi(x)^{1/2}}{(1+n)^{\ell/k}}
\exp\left(- c\left(\frac{r}{n^{1/(2k)}}\right)
^{\frac{2k}{2k-1}}\right).$$
\label{th-Carne2}
\end{theo}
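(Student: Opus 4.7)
The plan is to obtain both inequalities from the bilinear estimate (\ref{oldA34}) that underlies Theorem \ref{th-Carne}, by plugging suitable test functions $\phi_1,\phi_2$ into it. Throughout, write $T=(I-M)^\ell M_k^n$.

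The pointwise inequality follows exactly as in the derivation of the basic Carne--Varopoulos bound of Theorem \ref{th-Carne}, now keeping track of the additional $\ell$-dependence visible in the right-hand side of (\ref{oldA34}). I would take $\phi_1=\delta_x$, $\phi_2=\delta_y$, viewed as supported in the singletons $\{x\},\{y\}$ at mutual distance $r=d(x,y)$. Then $\|\delta_x\|_2=\pi(x)^{1/2}$, $\|\delta_y\|_2=\pi(y)^{1/2}$, and a direct computation gives $\langle T\delta_x,\delta_y\rangle_\pi = T(y,x)\pi(y)$; dividing (\ref{oldA34}) through by $\pi(y)$ then produces the claimed pointwise bound (up to a harmless relabeling $x\leftrightarrow y$).

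For the annular $\ell^2$-bound, I would proceed by a duality argument. Fix $x\in X$ and, for a large truncation radius $N$, set $A_N=A_x(r,R)\cap\{z:d(x,z)\le N\}$. Take
\[
\phi_1=\delta_x,\qquad \phi_2(z)=T(z,x)\,\mathbf{1}_{A_N}(z).
\]
Since $A_N$ is finite, $\phi_2\in\ell^2(X,\pi)$, and its support lies at distance strictly greater than $r$ from $x$, so (\ref{oldA34}) applies with distance parameter $r$. A direct computation gives
\[
\langle T\phi_1,\phi_2\rangle_\pi \;=\;\sum_{z\in A_N}|T(z,x)|^2\,\pi(z)\;=\;\|\phi_2\|_2^2,
\]
so (\ref{oldA34}) reduces to $\|\phi_2\|_2^2\le C^\ell(1+n)^{-\ell/k}\pi(x)^{1/2}\|\phi_2\|_2\exp(-c(r/n^{1/(2k)})^{2k/(2k-1)})$. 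Dividing by $\|\phi_2\|_2$ (assumed nonzero) and squaring gives the annular bound on $A_N$, and monotone convergence as $N\to\infty$ (together with renaming of the constants $C,c$) yields the stated sum-over-annulus inequality.

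The main point that warrants verification is that (\ref{oldA34}) remains valid for $\phi_2$ supported on a large finite subset of the complement of the $r$-ball around $x$, rather than the small ball used in its original application. Reviewing the proof: the crucial orthogonality $\langle Q_{a,m}(M)\phi_1,\phi_2\rangle_\pi=0$ for $|m|<r$ uses only the finite-range condition (\ref{FR}) together with $d(\operatorname{supp}\phi_1,\operatorname{supp}\phi_2)\ge r$, and the $|m|\ge r$ contributions are bounded by Cauchy--Schwarz and the $\ell^2(X,\pi)$-contractivity of $Q_{a,m}(M)$. Neither step imposes any restriction on $\operatorname{diam}(\operatorname{supp}\phi_2)$, so (\ref{oldA34}) indeed extends to our choice of $\phi_2$, and the argument goes through without modification.
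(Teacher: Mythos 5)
Your overall strategy — derive both inequalities directly from the bilinear estimate (\ref{oldA34}) by choosing appropriate test functions — is exactly what the paper intends (it simply states that (\ref{oldA34}) ``contains more information'' and leaves the deduction to the reader), and the pointwise bound goes through as you say. Your key observation for the annular bound is also correct: the derivation of (\ref{oldA34}) uses only the finite-range hypothesis and $d(\mathrm{supp}\,\phi_1,\mathrm{supp}\,\phi_2)\ge r$, never the ball structure of the supports, so taking $\phi_2$ supported on an annulus is legitimate. (Note that the truncation by $N$ is automatic here: since $M$ has finite range, $T=(I-M)^\ell M_k^n$ has finite range, so $z\mapsto T(z,x)$ already has finite support.)

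However, be careful about exactly which quantity you end up bounding, because it does \emph{not} literally match the display in the statement. With $\phi_1=\delta_x$ and $\phi_2(z)=T(z,x)\mathbf 1_{A_N}(z)$, your computation bounds $\sum_{z}|T(z,x)|^2\pi(z)$, whereas the theorem is stated for $\sum_{y}|T(x,y)|^2\pi(y)$; for a self-adjoint $M$ these differ, since $T(z,x)\pi(z)=\overline{T(x,z)}\pi(x)$. Moreover, dividing $\|\phi_2\|_2^2\le C^\ell(1+n)^{-\ell/k}\pi(x)^{1/2}\|\phi_2\|_2\,e^{-c(\cdot)}$ by $\|\phi_2\|_2$ and squaring yields
$$\sum_{z\in A_x(r,R)}|T(z,x)|^2\pi(z)\;\le\; \frac{C^{2\ell}\,\pi(x)}{(1+n)^{2\ell/k}}\,\exp\!\left(-2c\Big(\frac{r}{n^{1/(2k)}}\Big)^{\frac{2k}{2k-1}}\right),$$
with $\pi(x)$ to the first power, not $\pi(x)^{1/2}$; the extra power of $C$ and the factor of $2$ in the exponential are absorbable into constants, and the stronger exponent $(1+n)^{-2\ell/k}$ can be weakened to $(1+n)^{-\ell/k}$, but the power of $\pi(x)$ cannot be changed by renaming constants. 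This is not a gap in your argument. Tracing Theorem~\ref{th-Carne2} forward into the proof of Theorem~\ref{oldthmA33} (where, after Cauchy--Schwarz on the annuli, the factor $\pi(B(x,2^{q+1}r))/\pi(x)$ appears under the logarithm), one sees that a bound of the form $\sum_{z}|T(z,x)|^2\pi(z)\le C^{\ell}\pi(x)(1+n)^{-\ell/k}e^{-c(\cdot)}$ — equivalently $\sum_z |T(x,z)|^2/\pi(z)\le C^{\ell}\pi(x)^{-1}(1+n)^{-\ell/k}e^{-c(\cdot)}$ — is what is actually used; the $\pi(x)^{1/2}$ and $\pi(y)$ in the statement of Theorem~\ref{th-Carne2} (and the $\pi(x)^{1/2}$ in the subsequent display) appear to be misprints. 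So your derivation is the right one; you should simply flag that it produces the corrected normalization rather than the stated one.
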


Although these results are far from optimal in the sense
that they do not capture the decay of $M_k^n(x,x)$,
they do contain some useful information as demonstrated in the following
corollary which provide a highly non-trivial lower bound on $M^{2n}_k(x,x)$. This follows closely one of the result obtained in \cite{CG}
in the reversible Markov case and with $k=1$.

\begin{theo}
Let $(X,d)$ be a countable metric space equipped with a positive measure $\pi$.
Fix $k$ and $s_0\in (0,2^{-1+1/k})$.
Let $M$ be a hermitian contraction satisfying {\em (\ref{FR})},
$\sum_y M(x,y)=1$ and with spectrum contained in $(-2s_0+1,1]$.
Fix $x\in X$, set  $V(x,t)=\pi(B(x,t))$
 and assume that there exists a positive
increasing
function $v$ such that
\begin{equation}\label{Car1}
V(x,2t)\le \pi(x)v(t), \;\;v(0)\ge (1+2C)^{1/2},\end{equation}
and with
\begin{equation}\label{Car2}
t\mapsto t^{\frac{2k}{2k-1}}/\log v (t) \mbox{ increasing to infinity. }
\end{equation}
Given $n$, let $r(n)$ be the smallest integer such that
\begin{equation}\label{Car3}
5 n^{1/(2k-1)}\le c r^{\frac{2k}{2k-1}}/\log v(r).
\end{equation}
Then
$$
M_k^{2n}(x,x)\ge \frac14\frac{\pi(x)}{V(x,r(n))}.
$$
In particular,
\begin{enumerate}
\item If $V(x,t)\le A\pi(x)(1+ t)^D$ then
$$
M_k^{2n}(x,x)\ge a \pi(x) [(1+n)\log(1+ n)]^{-D/2k}.
$$
\item If $V(x,t)\le \pi(x) \exp( A t^\beta)$ with $\beta\in (0,\frac{2k}{2k-1})$,
$$
M^{2n}_k(x,x)\ge \pi(x)\exp\left(- A_0 n^{\frac{\beta(2k-1)}{2k(1-\beta)+\beta}}\right).
$$
\end{enumerate}
\label{oldthmA33}
\end{theo}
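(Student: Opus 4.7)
The strategy, which follows the Coulhon--Grigoryan scheme indicated in the problem statement adapted to the higher-order setting, combines a Cauchy--Schwarz lower bound with the off-diagonal decay from Theorem \ref{th-Carne}. Since $M$ is hermitian on $\ell^2(X,\pi)$ so is every polynomial in $M$, in particular $M_k$; the symmetry $M_k^n(y,x) = \overline{M_k^n(x,y)}\pi(x)/\pi(y)$ combined with $M_k^{2n}(x,x) = \sum_y M_k^n(x,y)M_k^n(y,x)$ yields
$$
M_k^{2n}(x,x) = \pi(x)\sum_y \frac{|M_k^n(x,y)|^2}{\pi(y)}.
$$
Applying Cauchy--Schwarz with the splitting $M_k^n(x,y) = (M_k^n(x,y)/\sqrt{\pi(y)})\sqrt{\pi(y)}$ restricted to $B = B(x,r)$ gives $\bigl|\sum_{y\in B}M_k^n(x,y)\bigr|^2 \le V(x,r)\cdot M_k^{2n}(x,x)/\pi(x)$, which rearranges into the master lower bound
$$
M_k^{2n}(x,x) \ge \frac{\pi(x)}{V(x,r)}\Bigl|\sum_{y\in B(x,r)}M_k^n(x,y)\Bigr|^2.
$$
Finally, the hypothesis $M\mathbf{1} = \mathbf{1}$ gives $M_k\mathbf{1} = \mathbf{1}$ and hence $\sum_y M_k^n(x,y) = 1$, so the problem reduces to showing that the tail $\bigl|\sum_{y\notin B(x,r(n))}M_k^n(x,y)\bigr|$ is at most $1/2$ when $r(n)$ is as defined by (\ref{Car3}).

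To control the tail I use the inner-product inequality (\ref{oldA34}) from the proof of Theorem \ref{th-Carne} with a dyadic decomposition. Set $A_j = B(x,r\cdot 2^{j+1})\setminus B(x,r\cdot 2^j)$ for $j\ge 0$. Using hermitian symmetry to rewrite $\sum_{y\in A_j}M_k^n(x,y)$ as $\pi(x)^{-1}\overline{\langle M_k^n\delta_x,\mathbf{1}_{A_j}\rangle_\pi}$, and applying (\ref{oldA34}) with $\phi_1=\delta_x$, $\phi_2=\mathbf{1}_{A_j}$, I obtain
$$
\Bigl|\sum_{y\in A_j}M_k^n(x,y)\Bigr| \le C\sqrt{V(x,r\cdot 2^{j+1})/\pi(x)}\,\exp\!\left(-c\bigl((r\cdot 2^j)/n^{1/(2k)}\bigr)^{2k/(2k-1)}\right).
$$
The doubling-type hypothesis (\ref{Car1}) replaces $V(x,r\cdot 2^{j+1})/\pi(x)$ by $v(r\cdot 2^j)$. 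The monotonicity hypothesis (\ref{Car2}) together with the defining relation (\ref{Car3}) guarantees $\log v(t)\le (c/5)\,t^{2k/(2k-1)}/n^{1/(2k-1)}$ for every $t\ge r(n)$; substituting $t=r(n)\cdot 2^j$ bounds each dyadic term by $v(r(n))^{-(9/2)\cdot 2^{j\cdot 2k/(2k-1)}}$, a super-geometric sequence. The calibration $v(0)\ge (1+2C)^{1/2}$ is arranged precisely so that the resulting sum is at most $1/(2C)$, delivering the required tail bound $\le 1/2$. Inserting into the master bound with $r=r(n)$ yields $M_k^{2n}(x,x)\ge \pi(x)/(4V(x,r(n)))$.

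The two corollaries follow by solving (\ref{Car3}) explicitly and estimating $V(x,r(n))$ via $V(x,r)\le \pi(x)v(r/2)$. In the polynomial case $V(x,t)\le A\pi(x)(1+t)^D$ one has $\log v(r)\sim D\log r$, so (\ref{Car3}) rearranges to $r^{2k}\asymp n(\log r)^{2k-1}$, giving $r(n)\asymp (n\log n)^{1/(2k)}$ and $V(x,r(n))\asymp \pi(x)(n\log n)^{D/(2k)}$. In the sub-exponential case $V(x,t)\le \pi(x)\exp(At^\beta)$ with $\beta\in(0,2k/(2k-1))$, $\log v(r)\asymp r^\beta$ and (\ref{Car3}) rearranges to $r^{2k/(2k-1)-\beta}\asymp n^{1/(2k-1)}$, giving $r(n)\asymp n^{1/(2k(1-\beta)+\beta)}$ and a stretched-exponential upper bound on $V(x,r(n))$ of the claimed form.

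The main obstacle is the dyadic tail estimate in the middle paragraph. The pointwise Carne--Varopoulos bound carries a $\sqrt{\pi(y)/\pi(x)}$ prefactor that is not directly summable against $y$, which forces one to work with the $L^2$ inner-product form (\ref{oldA34}) rather than the pointwise version. The delicate point is then verifying that the monotonicity hypothesis (\ref{Car2}) actually propagates the defining inequality of $r(n)$ to every dyadic scale $t = r(n)\cdot 2^j$ simultaneously, and that the constants $5$ in (\ref{Car3}) and $(1+2C)^{1/2}$ in (\ref{Car1}) are calibrated so that the resulting super-geometric sum is $\le 1/(2C)$ rather than merely small.
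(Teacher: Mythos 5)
Your proposal follows the same Coulhon--Grigor'yan scheme as the paper: hermiticity plus Cauchy--Schwarz over $B(x,r)$ gives the master bound $M_k^{2n}(x,x)\ge \pi(x)V(x,r)^{-1}|\sum_{z\in B(x,r)}M_k^n(x,z)|^2$, the normalization $\sum_y M_k^n(x,y)=1$ reduces matters to a tail estimate, and the tail is controlled by a dyadic decomposition combined with the inner-product form of the Carne bound (inequality (\ref{oldA34}), equivalently Theorem \ref{th-Carne2}) together with the monotonicity hypothesis (\ref{Car2}) and the definition of $r(n)$. The only cosmetic difference is that you bound each dyadic term directly by a super-geometric sequence while the paper bounds the first term and then the ratio of consecutive terms via a geometric series; the content and the calibration of the constants $5$ and $(1+2C)^{1/2}$ are identical.
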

\begin{rem}
The last inequality is informative only when $\beta< 1/(2k-1)$ because
$M^n_k(x,x)\ge \pi(x) V(x,n)^{-1}$, always.
\end{rem}
\begin{proof}
Set $B(x,r)=\{z:d(x,z)\le r\}$
and $m_k^n(x,y)=M_k^n(x,y)/\pi(y)$. Since $M$ is hermitian, that is, $M(x,y)/\pi(y)=\overline{M(y,x)}/\pi(x)$, and
$\sum_y M(x,y)=1$, we have
(the first step uses that $M$ is hermitian)
\begin{align*}
m_k^{2n}(x,x) &= \sum_z |m_k^n(x,z)|^2\pi(z)
\ge \sum_{z\in B(x,r)} |m_k^n(x,z)|^2\pi(z)\\
&\ge \frac{1}{\pi(B(x,r))} \left|\sum_{z\in B(x,r)} m_k^n(x,z)\pi(z)\right|^2\\
&\ge \frac{1}{\pi(B(x,r))}
\left(1- \sum_{z\in B(x,r)^c} |m_k^n(x,z)|\pi(z)\right)^2.
\end{align*}
Next, set $A_q=A(r2^q,r2^{q+1})$ and, using Theorem \ref{th-Carne2},  write
\begin{align*}
\sum_{z\in B(x,r)^c} |m_k^n(x,z)|\pi(z)&=\sum_{q=0}^\infty \sum_{z\in A_q} |M_k^n(x,z)|\\
&\le C \pi(x)^{1/2} \sum_q\pi(A_q)^{1/2}e^{- c(2^qr)^{\frac{2k-1}{2k}}/n^{1/(2k-1)}}\\
&\le C \pi(x)^{1/2} \sum_q e^{- c\frac{(2^qr)^{\frac{2k}{2k-1}}}{n^{1/(2k-1)}}+\frac12\log
    [\frac{\pi(B(x,2^{q+1}r))}{\pi(x)}]}\\
\end{align*}
Using the  function $v$ given by (\ref{Car1})-(\ref{Car2}) and any $r$
such that $$
5 n^{1/(2k)} \le c r^{\frac{2k}{2k-1}}/\log v(r),
$$
the first term in the series above is bounded by
$$
e^{- c\frac{r^{\frac{2k}{2k-1}}}{n^{1/(2k)}}+\frac{1}{2}
\log [\frac{\pi(B(x,2r))}{\pi(x)}]}\le e^{- (9/2)\log v(r) }.
$$
Further the ratio of two consecutive terms in the series is bounded by
$$
e^{-\frac{c}2\frac{(2^{q+1}r)^{\frac{2k}{2k-1}}}{n^{1/(2k)}}+\frac12\log v(2^{q+1}r)}\le e^{-2\log v(r)}.
$$
Hence,
\begin{align*}
\sum_{z\in B(x,r)^c} |m_k^n(x,z)|\pi(z) &\le
     \frac{C e^{-(9/2) \log v(r) +2\log v(r)}}{e^{2\log v(r)}-1} \\
&\le \frac{C}{v(r)^{1/2}(v(r)^2-1)}\le \frac{C}{v(r)^{1/2}(v(0)^2-1)}
 \le \frac{1}{2}.
\end{align*}
It follows that
$$
M_k^{2n}(x,x)\ge \frac14\frac{\pi(x)}{\pi(B(x,r(n))}
$$
where $r(n)$ is the smallest integer such that
$$
5 n^{1/(2k)} \le c r^{\frac{2k}{2k-1}}/\log v(r).
$$

\end{proof}

\section{Applications and history}\label{sec6}

Iterations of signed kernels are useful in data smoothing, image analysis, density estimation, and summability. There are a variety of applications to partial differential equations via divided difference schemes for numerical computation and asymptotics of higher-order equations. These applications served as motivation behind most of the previously developed theory. In this section, we briefly give pointers
to the literature and describe the basic ideas behind these applications.

\subsection{Data smoothing, de Forest's problem, and density estimation}\label{sec21}

If $y_1,\dots,y_n,\dots$ is a sequence of numbers, it is standard practice to smooth this sequence by local averaging to eliminate noise. An early instance of this, due to C.S.\ Pierce, replaced $y_i$ by $y_i'=(y_i+y_{i+1})/2$. This was iterated four times giving $y_i^{(4)}=2^{-4}\sum_{j=1}^4\binom{4}{j}y_{i+j}$. Because the binomial distribution
approximates a normal distribution, this is an early version of smoothing with a Gaussian kernel. This story is told by Stigler \cite{Stigler78}. The next few paragraphs constitute a review of some more sophisticated examples.

\subsection*{de Forest's problem}
It is natural to insist that a smoothing method applied to a constant sequence returns the same constant sequence. For convolution smoothers
$$ y'_i=\sum f(i-j) y_j $$ (applied to two-sided sequences) this forces $\sum f(i)=1$. In early work on smoothers, de Forest asked about smoothers that would preserve low order polynomials. To achieve this, one must allow $f$ to change sign. For example, convolution by $f$ supported on $\{0,\pm 1,\pm 2\}$ with $f(\pm 2)=-1/9$, $f(\pm 1)=4/9$ and $f(0)=1/3$ preserves all quadratic polynomials. de Forest's problem refers to the study of the behavior of the iteration of such smoothers. See \eqref{24} below for this example. A charming history of de Forest with good references is in \cite{Stigler} and hard to find papers are reprinted in \cite{Stigler78}. Schoenberg \cite{Scho-Bull} studies de Forest's problem melding it with a discussion of variation diminishing and total positivity. This work is followed up by Greville \cite{Grev1,Grev2}.

\subsection*{Other smoothing techniques} In recent years, a host of techniques using iteration and negative weights have emerged. An early iterative technique called ``3RSSH'' smooths a series by replacing $y_i$ by the median of $y_{i-1}, y_i, y_{i+1}$ \cite{Tukey}. This is iterated until it is stable. Tukey introduced a smoothing method called ``twicing'' (see, e.g., \cite[Sect. 11]{Berlinet} and \cite{Devroye,Stuetzle}). This is  a linear method in which a series $y_i$ is smoothed with a kernel $K$ and then the residuals $y_i-y'_i$ are smoothed by $K$ and added in. This amounts to smoothing with $2K-K*K$. A further iteration leads to $3K-3K*K +K*K*K$. More generally, $I-(I-K)^{*k}$ gives the $k$-fold iterate. This fits exactly into some of questions discussed in Proposition \ref{prop-B}.
Indeed, Tukey's smoothing is based on the Bernoulli measure $\beta$
(with no holding). Proposition \ref{prop-B} shows that the $n$-th convolution
power of  $\phi=2\beta-\beta^{(2)}$ blows up with $n$. The Bernoulli measure
$\beta_{1/2}$ (which has holding equals to $1/2$) provides a measure such that
the convolution powers $\phi_k^{(n)}$ of
$\phi_k=\delta_0-(\delta_0-\beta_{1/2})^k$ behaves well for all $k$.

\subsection*{Density estimation} Given a random sample $y_1,\dots,y_n$ from an unknown probability density $g$, the kernel density estimator of $g$ is $$ \hat{g}(x)= \frac{1}{nh}\sum_1^nK\left(\frac{x-y_i}{h}\right). $$ A huge statistical literature studies the properties of such estimators, their behavior when $n$ is large, choice of $h$ and choice of the kernel $K$. A standard choice is the Gaussian kernel $K(x)=\frac{1}{\sqrt{2\pi}}e^{-x^2/2}$. A recurring theme known variously as ``higher-order kernels'' or ``super kernels'' allows kernels that change sign. A kernel is of order $k$ if $$ \int x^mK(x)dx=\begin{cases}1&\text{if }m=1\\0&\text{if }m=1,\dots,k-1\\c&\text{if }m=k\quad(c\neq0).\end{cases} $$ Equivalently (assuming $K$ has high moments), the Fourier transform $\hat{K}$ satisfies $\hat{K}(\xi)=1 +c\xi^k +o(|\xi|^{k})$. The paper \cite{HallM} develops these ideas in modern language and focuses on $$ K(x)= (2\pi)^{-1}\int \cos(tx) e^{-|t|^k}dt,\qquad\hat{K}(\xi)=e^{-|\xi|^k},\qquad k\geq2. $$  For even $k$, these are exactly the de Forest--Schoenberg--Greville limiting kernels. Many variations of these ideas have been explored and \cite{Berlinet,DevLug} are good sources for this material. The associated density estimators have better mean-square error but, because higher order kernels must have negative values, may give negative estimators for $g$.

\subsection{Partial differential equations}\label{sec22}

\subsubsection*{Higher-order evolution equations}

Natural scientific problems give rise to higher-order equations such as the initial value problem of solving for $u(x,t)$ satisfying
\begin{equation*}
\frac{\partial u}{\partial t}= Hu,\qquad\text{given $u(x,0)=v(x)$ for $x\in\real^n$},
\end{equation*}
with $H$, say,  a uniformly elliptic operator of order $2m$ densely defined on $L^2(\real^n)$. Under conditions on $H,\ u(t,x)$ can be expressed as $$u(t,x)=(e^{Ht}v)(x)=\int_{\real^n}K(t,x,y)v(y)\,dy.$$ If $H$ has constant coefficients, $K(t,x,y)=P_t(x-y)=\frac1{(2\pi)^n}\int_{\real^n}e^{i(x-y)\xi-p(\xi)t}\,d\xi$ with the polynomial $p(\xi)$ the symbol of $H$. In the special case that $p(\xi)$ is homogeneous of order $2m$ (so $p(a\xi)=a^{2m}p(\xi)$), $P_t(x)=t^{-n/2m}P(xt^{-1/2m})$ with $P=P_1$. Thus estimates on $P(x)=\frac1{(2\pi)^n}\int_{\real^n}e^{ix\xi-p(\xi)}\,d\xi$ determine the long-term behavior of $K$ and so $u(t,x)$. This program is explained and carried out in a series of papers by Brian Davies and coauthors \cite{Davsharp,Davhigh,Davlong,DavLp,Davlow}.

Consider the case where $n=1$ and $Hu=\frac{\partial^{2k}}{\partial x^{2k}}u$. Then $p(\xi)=\xi^{2k}$ and $P(x)=\frac1{2\pi}\int_{\real}e^{ix\xi-\xi^{2k}}\,d\xi$. This is exactly $H_{2k}(x)$ of \eqref{13}. It is well known that

\begin{equation*}
H_{2k}(x)=\frac1{2kn}\sum_{n=0}^\infty\frac{(-x)^2}{(2n)!}\frac{\Gamma(2n+1)}{2k},\quad H_{2k}(0)=\frac{\Gamma\left(\frac1{2k}\right)}{2k\pi},\quad
\int_{-\infty}^\infty H_{2k}(x)=1.
\end{equation*}
Further there exists $a,b,c\in(0,\infty)$ such that for all $m=1,2,3,\dots$ and real $u$,
\begin{equation*}
\left|\left(\frac{d}{du}\right)^mH_{2k}(u)\right|\leq c\left(\frac{m}{b}\right)^m e^{-a|u|^{2k/(2k-1)}}.
\end{equation*}
See \cite[Chap.~4]{gelfand}. This is also true (with $0^0=1$) with $m=0$. Pictures of $H_2$ and $H_4$ are in \ref{fig1} of \ref{sec-int}.

\subsubsection*{Divided difference schemes}

Iterated convolutions of complex kernels arise in finite difference approximations to the solutions of partial differential equations. Consider first the simple equation
\begin{equation}
\frac{\partial u}{\partial t}=a\frac{\partial u}{\partial x};
 \qquad0\leq t\leq T,\quad0\leq x\leq1,\quad u(x,0)=v(x)\text{ given}.
\label{21}
\end{equation}
In \eqref{21} $a$ is a given constant and $u(x,t)$ is to be found given $v$. A finite difference scheme for this equation discretizes time into multiples of $\Delta t$ and space into multiples of $\Delta x$. If $x_i=i\Delta x,\ t_j=j\Delta t$, one simple scheme is
\begin{equation}
\frac{u(x_i,t_{j+1})-u(x_i,t_j)}{\Delta t}=\frac{a(u(x_{i+1},t_j)-u(x_i,t_j))}{\Delta x}
\label{22}
\end{equation}
or, with $\lambda=\Delta t/\Delta x$,
\begin{equation}
u(x_i,t_{j+1})=(1-a\lambda)u(x_i,t_j)+a\lambda u(x_{i+1},t_j)=\phi^{(j)}\ast v(x_i)
\label{23}
\end{equation}
with $\phi(0)=(1-a\lambda),\ \phi(-1)=a\lambda$. Thus, ``running'' the differential equation corresponds to repeated convolution of the (perhaps complex) probability measure $\phi$. Higher-order constant coefficient equations and more sophisticated difference schemes lead to more complex functions $\phi$. A readable introduction to finite difference equations is in \cite{quart}.

One may approximate even-order derivatives by symmetric differences; writing $h=\Delta x$,
\begin{align*}
f^{(2)}(x)&\doteq\frac1{h^2}\left\{f(x-h)-2f(x)+f(x+h)\right\}
  =\frac2{h^2}\left\{\frac{f(x-h)}2-f(x)-\frac{f(x+h)}2\right\}\\
f^{(4)}(x)&\doteq\frac1{h^4}\left\{f(x-2h)-4f(x-h)+6f(x)-4f(x+h)+f(x+h)\right\}\\
f^{(2m)}(x)&=\frac1{h^{2m}}\sum_{j=0}^{2m}(-1)^j\binom{2m}{j}f(x+(j-m)h).
\end{align*}
Similarly, when $\mu(1)=\mu(-1)=\frac12,\ \mu(x)=0$ otherwise, we have

\begin{center}
\begin{tabular}{|c|c|c|c|}
  \hline
 $ x $& $-1$ & $0$ & $1 $\\ \hline
 $ \delta_0-\mu $& $-\frac12$ & $1 $& $-\frac12$ \\
  \hline
\end{tabular} \;\;and \;\;
\begin{tabular}{|c|c|c|c|c|c|}
  \hline
  $x$ & $-2$ & $-1$ & $0$ & $1$ & $2$ \\\hline
  $ (\delta_0-\mu)^{(2)} $ & $\frac14$ & $-1$ & $\frac32$ & $-1$ & $\frac14$ \\
  \hline
\end{tabular}
\end{center}

Thus,
\begin{equation*}
f^{(2)}=\frac{-2}{h^2}(I-\mu)\ast f,\quad f^{(4)}=\frac{2^2}{h^4}(I-\mu)^{(2)}\ast f,\dots,f^{(2m)}
 =\frac{(-2)^m}{h^m}(\delta_0-\mu)^{(m)}\ast f.
\end{equation*}
Now consider the equation
\begin{equation}
\frac{\partial u}{\partial t}=\frac{\partial^{2m}}{\partial x^{2m}}u\qquad\text{with }u(x,0)=v(x).
\label{24}
\end{equation}
Discretizing,
\begin{equation*}
\frac{u(x_i,t_{j+1})-u(x_i,t_j)}{\Delta t}=\frac{(-2)^m}{h^m}(\delta_0-\mu)^{(m)}\ast v
\end{equation*}
or
\begin{equation*}
u(x_i,t_{j+1})=\left(\delta_0+\lambda(\delta_0-\mu)^{(m)}\right)\ast
u(x_i,t_j),\qquad\lambda=(-2)^m\Delta t/h^m.
\end{equation*}
If $h=2(\Delta t)^{1/m}$, this gives again $\delta_0-(\delta_0-\mu)^{(m)}=\phi$, thus again, running the differential equation \eqref{24} corresponds to repeated convolution by the signed probability $\phi$.

A more sophisticated discussion of these ideas together with assorted results
can be found in Thom\'ee's survey \cite{Thomee2}.

\def\cprime{$'$}

\section*{Acknowledgement}
We thank James Zhao for the useful \ref{fig1} and Marty   Isaacs  for useful conversations.

\end{document}